\newcommand\R{{\mathbf{R}}}
\newcommand\C{{\mathbf{C}}}
\renewcommand\P{{\mathbb{P}}}
\newcommand\E{{\mathbb{E}}}
\newcommand\T{{\mathbf{T}}}
\newcommand\Var{{\operatorname{Var}}}
\newcommand\Z{{\mathbf{Z}}}
\newcommand\eps{\varepsilon}
\newcommand\Bg{{\mathbf g}}
\newcommand\Bu{{\mathbf u}}
\newcommand\Bv{{\mathbf v}}
\newcommand\Bx{{\mathbf x}}
\newcommand\BS{{\mathbf S}}
\newcommand\BT{{\mathbf T}}
\newcommand\CA{{\mathcal A}}
\newcommand\CC{{\mathcal C}}
\newcommand\CE{{\mathcal E}}
\newcommand\CS{{\mathcal S}}
\newcommand\CU{{\mathcal U}}
\newcommand\CZ{{\mathcal Z}}
\renewcommand\mod{\ \operatorname{mod}\ }
\newcommand\ep{\varepsilon}
\newcommand{\al}{\alpha}
\newcommand{\la}{\lambda}
\newcommand{\Ent}{\operatorname{Ent}}
\theoremstyle{plain}
  \newtheorem{theorem}[subsection]{Theorem}
  \newtheorem{conjecture}[subsection]{Conjecture}
    \newtheorem{proposition}[subsection]{Proposition}
  \newtheorem{lemma}[subsection]{Lemma}
  \newtheorem{corollary}[subsection]{Corollary}
  \newtheorem{cor}[subsection]{Corollary}
 \newtheorem{question}[subsection]{Question}
  \newtheorem{condition}{Condition}
  \newtheorem{claim}[subsection]{Claim}
\theoremstyle{definition}
\begin{document}

\title[Random eigenfunctions on flat tori]{Concentration of the number of intersections of random eigenfunctions on flat tori}
\author{ Hoi H. Nguyen}

\address{Department of Mathematics, The Ohio State University, Columbus, Ohio 43210}
\email{nguyen.1261@math.osu.edu}

\subjclass[2010]{15A52,11B25, 60C05, 60G50}
\keywords{arithmetic random waves, universality phenomenon}

\thanks{The author is partially supported by National Science Foundation grant DMS-1752345.}

\maketitle
\begin{abstract}
We show that in
two dimensional flat tori the number of intersections between
random eigenfunctions of general eigenvalues and a given smooth curve is almost exponentially concentrated around its mean, even when the randomness is not gaussian. 
\end{abstract}

\section{Introduction}
Let $\T^2$ be the two dimensional flat tori $\R^2/\Z^2$. Let $F$ be a real-valued eigenfunction of the Laplacian on $\T^2$ with eigenvalue $\lambda^2$,
$$-\Delta F = \lambda^2 F.$$

It is known that all eigenvalues $\lambda^2$ have the form $4\pi^2 m$ where $m=a^{2}+b^{2}$ for some $a, b\in \Z$. Let $\CE_\lambda$ be the collection of $\mu=(\mu_1,\mu_2) \in \Z^2$ such that
$$\mu_1^2 + \mu_2^2 = m.$$
Denote $N=\# \CE_\lambda$. Note that if we express $m$ in the form $m=m_1^2 m_2$ with $m_1=2^r \prod_{q_k \equiv 3\mod 4} q_k^{b_k}$ and $m_2= 2^c \prod_{p_j \equiv 1\mod 4} p_j^{a_j}$ ($c=0,1$) then
$$N = \prod_j (a_j+1).$$
 
The toral eigenfunctions $F(x) =e^{2\pi i \langle \mu, x\rangle}, \mu \in \CE_\lambda$ form an orthonormal basis in the eigenspace corresponding to $\lambda^2$. For a given toral eigenfunction $F$ the nodal set $N_F$ is defined to be the zero set of $F$,
$$N_F :=\left \{x\in \T^2: F(x)=0\right \}.$$
The nodal set $N_F$ has been studied intensively in analysis and differential geometry. In this note we will be focusing on the intersection between $N_F$ and a given reference curve $\CC\subset \T^2$ parametrized by $\gamma: [0,1] \to \T^2$ with the following properties.

\begin{condition}[Assumption on $\gamma$] \label{cond-curve}  $\CC$ has unit length and $\gamma(t)$ is real analytic with positive curvature. More specifically, there exists a positive constant $c$
such that 
$$\|\gamma'(t) \|_2=1\mbox{ and } \|\gamma''(t)\|_2 >c \mbox{ for all } t.$$ 
\end{condition}

The number of nodal intersections $\CZ(F)$ between $F$ and $\CC$ is defined to be the cardinality of the intersection $N_{F}\cap \CC$.
$$\CZ(F) := \# \{x : x\in \CC \wedge F(x) = 0\}.$$

\subsection{Deterministic results} About ten years ago Bourgain and Rudnick provided uniform upper and lower bounds for the $L^{2}$-norm of the restriction of $F$ to $\CC$ as follows.
\begin{theorem}\cite[Main Theorem, Theorem 1.1]{BR1} \label{thm:restriction:size} Assume that $\CC$ is as in Condition \ref{cond-curve}. We have
\begin{equation}\label{eqn:restriction}
\int_\CC |F|^2 d \gamma =\Omega\left (\int_{\T^2} |F(x)|^2 dx\right ).
\end{equation}
Also, for any $\ep>0$,
$$\lambda^{1-\ep} \ll \CZ(F) \ll \lambda,$$
where the implicit constants depend only on $\CC$ and $\ep$ but not on $\lambda$.  
\end{theorem}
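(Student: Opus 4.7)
Expand the eigenfunction as $F(x) = \sum_{\mu \in \CE_\lambda} c_\mu e^{2\pi i \langle \mu, x \rangle}$, so that $\|F\|_{L^2(\T^2)}^2 = \sum_\mu |c_\mu|^2$, and study the restriction $f(t) := F(\gamma(t))$ on $[0,1]$. After normalizing $\|F\|_{L^2(\T^2)} = 1$, the three assertions read: (i) $\|f\|_{L^2[0,1]} \gg 1$; (ii) $\CZ(F) \ll \lambda$; and (iii) $\CZ(F) \gg \lambda^{1-\eps}$.

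For (i), I would expand
\begin{equation*}
\int_0^1 |f(t)|^2 \, dt = \sum_{\mu, \nu \in \CE_\lambda} c_\mu \bar c_\nu \, \widehat{\sigma}(\nu - \mu), \qquad \widehat{\sigma}(\xi) := \int_0^1 e^{-2\pi i \langle \xi, \gamma(t) \rangle} dt.
\end{equation*}
The diagonal $\mu = \nu$ contributes $\sum_\mu |c_\mu|^2 = 1$. The curvature hypothesis of Condition~\ref{cond-curve} ensures that for every nonzero $\xi$ the phase $\langle \xi, \gamma(\cdot)\rangle$ has only nondegenerate critical points on $[0,1]$ (at most two, where $\xi \perp \gamma'$), so the principle of stationary phase yields $|\widehat{\sigma}(\xi)| \ll |\xi|^{-1/2}$ for $|\xi| \geq 1$. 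Absorbing the off-diagonal into the diagonal then reduces to an arithmetic separation statement for lattice points on the circle of radius $\lambda$: by a Jarn\'ik/Cilleruelo-type bound, any three such lattice points span a triangle of area $\geq 1/2$, which forces angular gaps $\gg \lambda^{-2/3}$ and tightly controls the number of pairs $(\mu,\nu) \in \CE_\lambda^2$ with any given small difference $\nu - \mu$. I expect this arithmetic--analytic balancing step to be the main obstacle.

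For (ii), $f$ extends holomorphically to a fixed complex strip $\{t + is : t \in (-\delta, 1+\delta),\; |s| < \delta\}$ via the real-analytic extension of $\gamma$. Using $|e^{2\pi i \langle \mu, \gamma(t+is)\rangle}| \leq e^{O(\lambda|s|)}$ together with the divisor bound $N := \#\CE_\lambda \leq \tau(m) = O_\eps(\lambda^\eps)$, we get $|f(t+is)| \leq N^{1/2}e^{O(\lambda|s|)} \leq e^{O(\lambda)}$ on the strip; a standard Jensen's-formula argument then bounds the number of real zeros of $f$ in $[0,1]$ by $O(\lambda)$.

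For (iii), I would combine the restriction estimate $\|f\|_{L^2[0,1]} \gg 1$ from (i) with the $L^\infty$ bound $\|f\|_\infty \leq N^{1/2} = O(\lambda^\eps)$ (Cauchy--Schwarz and the divisor bound) and the derivative bound $\|f'\|_\infty \leq 2\pi \lambda \sum_\mu |c_\mu| = O(\lambda^{1+\eps})$, which reflects the band-limited character of $f$ of bandwidth $O(\lambda)$. A quantitative Tur\'an/Nazarov-type sign-change inequality for functions whose spectrum has diameter $O(\lambda)$, applied to the analytic extension from (ii), then prevents $f$ from being tame on any interval longer than $\lambda^{-1+\eps}$ without contradicting the $L^2$ lower bound, giving at least $\lambda^{1-O(\eps)}$ sign changes of $f$ on $[0,1]$ and hence $\CZ(F) \gg \lambda^{1-\eps}$.
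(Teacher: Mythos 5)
The paper does not prove this result: it is cited verbatim from Bourgain--Rudnick \cite{BR1} and used as a black box, so there is no ``paper's own proof'' to compare against. What I can do is compare your sketch to the route taken in \cite{BR1} and flag the places where it is not yet an argument.

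Your overall architecture is the right one and matches \cite{BR1} in spirit: prove the $L^2$ restriction estimate by expanding on the basis $e^{2\pi i\langle\mu,x\rangle}$, treating the diagonal as the main term and the off-diagonal via oscillatory-integral decay plus lattice-point geometry; get the upper bound $\CZ(F)\ll\lambda$ from analytic continuation to a strip of width $\Theta(1/\lambda)$ and Jensen's formula; and get the lower bound by playing the $L^2$ lower bound off against $L^\infty$/Bernstein-type upper bounds. Parts (i) and (ii) of your sketch are on target as heuristics. For (i) the real work, which you correctly flag, is the absorption of the off-diagonal: the stationary-phase bound $|\widehat\sigma(\xi)|\ll|\xi|^{-1/2}$ together with a naive triangle-area or Jarn\'ik separation is \emph{not} enough, because the Schur-test sum $\max_\mu\sum_{\nu\ne\mu}|\nu-\mu|^{-1/2}$ over $\CE_\lambda$ is not $O(1)$ in general; Bourgain--Rudnick split into near and far pairs and use a considerably more delicate arithmetic input (their Lemma 4.1, quoted here as Theorem~\ref{thm:restriction}, encapsulates this) to handle the near-diagonal. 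For (ii) your strip-plus-Jensen argument is essentially theirs, modulo the needed normalization that some point of $[0,1]$ has $|f|\gg1$, which follows from (i).

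Part (iii) is where your sketch has a genuine gap. The ``Tur\'an/Nazarov-type sign-change inequality'' you invoke does not, as usually stated, give anything here: Tur\'an/Nazarov-type comparison inequalities for exponential sums with $n$ frequencies carry constants of the form $(C/|E|)^{n-1}$, and Remez-type inequalities for functions band-limited to $[-\lambda,\lambda]$ carry constants exponential in $\lambda|[0,1]\setminus E|$. In either formulation the constant blows up far too fast to convert the mild mismatch between $\|f\|_{L^2[0,1]}\gg 1$ and $\|f\|_\infty\ll\lambda^\eps$, $\|f'\|_\infty\ll\lambda^{1+\eps}$ into $\lambda^{1-\eps}$ sign changes; these bounds alone are, e.g., also satisfied by functions with a single sign change. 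The actual Bourgain--Rudnick lower bound requires using more structure than an abstract band-limited comparison principle: roughly, one needs a \emph{local} version of the restriction estimate on subintervals of length $\lambda^{-1+\delta}$ together with oscillatory-integral cancellation for $\int_J f$ over sign-constant intervals $J$, so that a long zero-free interval forces $\int_J|f|=\big|\int_J f\big|$ to be simultaneously large (constant sign, local $L^2$ mass, $L^\infty$ bound) and small (stationary phase summed over $\CE_\lambda$), and even this requires care to reach the exponent $1-\eps$. You should either cite the specific Bourgain--Rudnick mechanism or replace the appeal to Tur\'an/Nazarov with an argument that exploits the eigenfunction structure and the curvature of $\CC$ more directly.
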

Here we say that $f = O(g)$, or $g=\Omega(f)$, or $f\ll g$, if there exists a constant $C$ such that $|f|\le C|g|$.

It was then conjectured by Bourgain and Rudnick that the lower bound is of order $\lambda$.
\begin{conjecture}\cite{BR1}\label{conj:BR} We have
$$\CZ(F) \gg \lambda .$$
\end{conjecture}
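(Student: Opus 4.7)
The plan is to upgrade the $L^2$ restriction bound of Theorem~\ref{thm:restriction:size} to a lower bound on $\CZ(F)$ by combining it with the band-limited structure of $F$ along $\CC$. Normalize $F=\sum_{\mu\in\CE_\lambda}c_\mu e^{2\pi i\langle\mu,x\rangle}$ so that $\sum_\mu|c_\mu|^2=1$, and set $f(t):=F(\gamma(t))$. Then \eqref{eqn:restriction} reads $\int_0^1|f|^2\,dt\gg 1$, while the constraint $|\mu|\le\lambda/(2\pi)$ yields the Bernstein bounds $\|f'\|_{L^\infty}\ll\lambda\|f\|_{L^\infty}$ and $\|f'\|_{L^2}\ll\lambda\|f\|_{L^2}$.

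A first natural attempt is a direct Poincar\'e calculation: if $0\le t_1<\cdots<t_Z\le 1$ are the consecutive zeros of $f$ with gaps $d_i=t_{i+1}-t_i$, the fundamental theorem of calculus gives $|f(t)|\ll\lambda\|f\|_\infty\min(t-t_i,t_{i+1}-t)$ on $[t_i,t_{i+1}]$, so squaring and integrating produces $1\ll\int|f|^2\ll\lambda^2\|f\|_\infty^2\sum_id_i^3$. Unfortunately, the power-mean bound $\sum d_i^3\ge 1/Z^2$ only gives back $Z\ll\lambda\|f\|_\infty$, which matches the known upper bound in Theorem~\ref{thm:restriction:size} and is silent on the lower bound. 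The correct reformulation is to localize to scale $1/\lambda$, on which Bernstein essentially freezes $f$, and to count the windows of length $\asymp 1/\lambda$ that witness a sign change.

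This reduces Conjecture~\ref{conj:BR} to two tasks: (i) prove a matching $L^4$ restriction estimate $\int_\CC|F|^4\,d\gamma\ll 1$, which via reverse Chebyshev forces $|f|$ to be of order $1$ on a set of positive measure; and (ii) upgrade this to a count of sign changes by using that $f$ is real-valued and band-limited of degree $\lambda$, so that on each $1/\lambda$-window where $|f|$ is comparable to $\|f\|_{L^2}$, Bernstein prevents $f$ from maintaining a single sign without violating the $L^2$ mass budget on a neighboring window. Step (ii) I expect to be a relatively soft Bernstein--uncertainty statement; the main obstacle is (i). Expanding $|F|^4$ and integrating against $d\gamma$, an application of stationary phase reduces the $L^4$ bound to controlling the weighted additive energy of $\CE_\lambda$ along tangent directions of $\CC$, equivalently an incidence bound between lattice points on the circle of radius $\sqrt m$ and short arcs of the dual curve to $\CC$. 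This arithmetic input is delicate and, to the best of my knowledge, not available uniformly in $m$, which is precisely where the deterministic conjecture resists a complete proof --- and where random averaging, as in the present paper, becomes the natural replacement.
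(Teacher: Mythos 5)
This is a \emph{conjecture}, not a theorem --- the paper states it, attributes it to Bourgain--Rudnick \cite{BR1}, and offers no proof; the best known deterministic partial result it quotes is $\CZ(F)\gg\lambda/B_\lambda^{5/2}$ from \cite{BR-Lp}, and the paper's own contribution is a randomized replacement, not a resolution of the conjecture. You are right to conclude that your outline does not close, and you correctly locate the main arithmetic obstruction (the $L^4$ restriction bound along $\CC$, which hinges on incidence/additive-energy information about $\CE_\lambda$ on short arcs that is not known uniformly in $m$). Two further soft spots in the sketch, though, deserve flagging. First, the Bernstein step is not as free as written: $f(t)=F(\gamma(t))$ is not a trigonometric polynomial of degree $\lambda$ in $t$, nor literally band-limited to $[-\lambda,\lambda]$, because the phases $\langle\mu,\gamma(t)\rangle$ are nonlinear in $t$; one only has quasi-band-limitation, and making $\|f'\|_\infty\ll\lambda\|f\|_\infty$ rigorous requires quantifying the Fourier tail of $f$, which again feeds back into the same arc-concentration issue. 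Second, step (ii) is not a ``soft'' uncertainty statement: knowing $|f|\asymp1$ on a set of positive measure does not by itself force any zero --- $f$ could a priori keep one sign there --- so one needs an independent cancellation input (e.g.\ smallness of $\int_\CC F\,d\gamma$ or of localized first moments via stationary phase) to convert mass into sign changes, and that is a separate, nontrivial estimate. In short, your diagnosis of where the deterministic route stalls matches the state of the art, and the correct reading of the paper is that it replaces the unproven Conjecture \ref{conj:BR} with probabilistic concentration (Theorems \ref{thm:main:g} and \ref{thm:main}) for which entirely different tools --- the universality input of Theorem \ref{theorem:general}, the repulsion estimate, and the restricted large sieve --- are brought to bear.
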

In a subsequent paper, to support this conjecture they showed
\begin{theorem}\cite[Theorem 1.1]{BR-Lp}\label{theorem:BR} Assume that $\CC$ is as in Condition \ref{cond-curve}, then
$$\CZ(F) \gg \frac{\lambda}{B_\lambda^{5/2}}$$
where $B_\lambda$ denote the maximal number of lattice points which lie on an arc of size $\sqrt{\lambda}$ on the circle $\|x\|_2=\lambda$,
$$B_\lambda := \max_{\|x\|_2 =\lambda} \# \left \{\mu \in \CE_{\lambda}: \|x-\mu\|_2 \le \sqrt{\lambda}\right \}.$$
\end{theorem}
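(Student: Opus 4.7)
My approach is an $L^p$-comparison: combine the $L^2$ restriction lower bound of Theorem~\ref{thm:restriction:size} with a matching $L^4$ restriction upper bound whose arithmetic dependence is captured by $B_\lambda$, then translate the $L^2/L^4$ ratio into a lower bound on the number of sign changes of $F|_\CC$.

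After normalizing $F=\sum_{\mu\in\CE_\lambda} c_\mu e^{2\pi i \langle \mu,x\rangle}$ so that $\sum|c_\mu|^2=1$, Theorem~\ref{thm:restriction:size} immediately gives $\|F\|_{L^2(\CC)}^2\gg 1$. The first substantive step is to establish the $L^4$ upper bound
\[
\|F\|_{L^4(\CC)}^4 \ll B_\lambda^{5/2}.
\]
Using that $F$ is real, I would expand $F^4$ as a quadruple sum over $(\mu_1,\mu_2,\mu_3,\mu_4)\in\CE_\lambda^4$ and exchange sum and integral, reducing matters to bounding the oscillatory integrals $I(\nu):=\int_0^1 e^{2\pi i\langle\nu,\gamma(t)\rangle}\,dt$ at $\nu=\mu_1+\mu_2+\mu_3+\mu_4$. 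The positive-curvature hypothesis on $\CC$ yields $|I(\nu)|\ll |\nu|^{-1/2}$ via stationary phase for $\nu\neq 0$; the diagonal contribution $\nu=0$ reduces to counting ``Pythagorean quadruples'' inside $\CE_\lambda$, whose size is controlled by $B_\lambda$ since two elements of $\CE_\lambda$ whose sum is small must be close on the circle $\|x\|_2=\lambda$.

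The second step converts the $L^2/L^4$ gap into the nodal intersection count. Let $N=\CZ(F)$ and let $A_1,\ldots,A_{N+1}$ be the maximal arcs on which $F|_\CC$ is single-signed. A local Harnack-type oscillation estimate for $\lambda^2$-eigenfunctions ensures that arcs of length $\gg 1/\lambda$ contribute negligibly to $\|F\|_{L^2(\CC)}^2$, so we may restrict attention to arcs with $|A_i|\ll 1/\lambda$. Two applications of Cauchy--Schwarz then give
\[
\|F\|_{L^2(\CC)}^2 \le \sum_i |A_i|^{1/2}\|F\|_{L^4(A_i)}^2 \le \sqrt{N+1}\Bigl(\sum_i |A_i|\,\|F\|_{L^4(A_i)}^4\Bigr)^{1/2} \ll \sqrt{N/\lambda}\,\|F\|_{L^4(\CC)}^2,
\]
so that $N\gg \lambda\,\|F\|_{L^2(\CC)}^4/\|F\|_{L^4(\CC)}^4 \gg \lambda/B_\lambda^{5/2}$, as required.

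The main obstacle is the $L^4$ estimate: pinning down exactly the exponent $5/2$ of $B_\lambda$ requires carefully separating the true diagonal ($\nu=0$), near-diagonal (small nonzero $|\nu|$), and oscillatory contributions, together with sharp arc-length bounds on how many elements of $\CE_\lambda$ can lie close to one another on the circle $\|x\|_2=\lambda$. The reduction to arcs of length $\ll 1/\lambda$ in the nodal counting step is a secondary, though still non-trivial, technical point, since a priori an arc of $\CC$ on which $F$ does not vanish might be substantially longer than $1/\lambda$ and one must genuinely use the eigenfunction equation to rule out a large $L^2$ contribution from such arcs.
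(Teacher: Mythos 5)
This statement is quoted from Bourgain--Rudnick \cite{BR-Lp}; the paper you are reading does not reprove it, so there is no in-paper proof to compare against. Your outline is in the right general spirit --- Bourgain--Rudnick do combine the $L^2$ restriction lower bound with arithmetic $L^4$-type restriction estimates whose dependence is expressed through $B_\lambda$ --- but the step that carries all the weight in your version is not justified and, as stated, cannot be.

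The problem is the reduction ``we may restrict attention to arcs with $|A_i|\ll 1/\lambda$.'' The maximal single-sign arcs $A_i$ have total length $1$ and, if $\CZ(F)$ happened to be much smaller than $\lambda$, the overwhelming majority of them would have length $\gg 1/\lambda$ and would necessarily carry almost all of the $L^2$ mass (which is $\gg 1$). So the statement that long arcs contribute negligibly is not an a priori observation available to the argument; it is essentially equivalent to the conclusion one is trying to establish, and in fact a clean version of it would already give the full Bourgain--Rudnick Conjecture $\CZ(F)\gg\lambda$ rather than the conditional bound with $B_\lambda^{5/2}$. There is no ``local Harnack-type oscillation estimate'' that supplies this: $F|_\CC$ is the one-dimensional restriction of a toral eigenfunction, not a one-dimensional eigenfunction, and nothing about the eigenvalue equation prevents it from staying single-signed over a long subarc of a positively curved curve --- that is precisely the open difficulty. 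Once this step is removed, your two applications of Cauchy--Schwarz collapse to the H\"older inequality $\|F\|_{L^2(\CC)}\le\|F\|_{L^4(\CC)}$ with no factor of $N$ appearing, so the argument yields nothing.

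A genuine proof along these lines has to bring a \emph{local} $L^2$ restriction estimate into play (i.e.\ control of $\int_J|F|^2$ for subarcs $J$ of length comparable to $1/\lambda$, which is part of what \cite{BR1} provides), rather than just the global bound of Theorem~\ref{thm:restriction:size}, and then pair it with the $L^4$-type arithmetic input at the scale of a fixed partition of $\CC$ into $\sim\lambda$ arcs, counting how many of those arcs can be zero-free. You should also verify the exponent in the intermediate inequality $\|F\|_{L^4(\CC)}^4\ll B_\lambda^{5/2}$; it is forced by the target bound if one insists on this exact scheme, but it is not obviously what the quadruple-sum/stationary-phase computation actually produces, and in \cite{BR-Lp} the $5/2$ arises from a more careful bookkeeping of diagonal, near-diagonal, and oscillatory contributions together with arc-counting for $\CE_\lambda$ rather than from a single clean $L^4$ bound plugged into Cauchy--Schwarz.
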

In particular, as one can show that $B_\lambda\ll \log \lambda$ (see \cite{BR-Lp}), we have
$$\CZ(F)\gg \lambda/\log ^{5/2}\lambda.$$
The link in Theorem \ref{theorem:BR} between $\CZ(F)$ and $B_{\lambda}$ yields another interesting relationship between Bourgain-Rudnick conjecture \ref{conj:BR} and Cilleruelo-Granville conjecture \cite{CG} which predicts that $B_\lambda = O(1)$ uniformly. This is known to hold for almost all $\lambda^2$, see for instance \cite[Lemma 5]{BR}. 
It's worth noting that when the curvature of $\CC$ is zero, it could happen that $\lim\inf_{\lambda} \CZ(F) = 0$ (see for instance the construction in \cite{BR-Lp}.) 
 
For later use, we cite here one of the key technical ingredients in the proof of Theorem \ref{theorem:BR}.
\begin{theorem}\cite[Lemma 4.1]{BR-Lp}\label{thm:restriction} For each $\mu\in \CE$ let  $h_\mu(t) \in C^1[0,1]$ and $\eps_\mu \in \C$ with $\sum_\mu |\eps_\mu|^2 =1$. Let 
$$H(t) = \sum_\mu \eps_\mu h_\mu (t) e^{i \langle \mu, \gamma(t)\rangle}.$$ 
Then there exists a constant $C_0$ depending on $\CC$ such that 
$$\int_{0}^1 |H(t)|^2dt \le 2 \max_\mu \int_{0}^1 |h_\mu(t)|^2  dt+ C_0 \frac{N}{\la^{1/6}}\Big(\max_\mu \max_t |h_\mu(t)|^2 + \max_\mu \max_t |h_\mu(t)|  \max_\mu \int_0^1 |h_\mu'(t)|\Big).$$
\end{theorem}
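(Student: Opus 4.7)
The plan is to expand $|H(t)|^2$ as a double sum over $\mu, \mu' \in \CE_\lambda$, interchange with the $t$-integration, separate the diagonal $\mu = \mu'$ from the off-diagonal contribution, and bound each off-diagonal oscillatory integral by van der Corput's lemma combined with a count of lattice points on the circle $|x|_2 = \lambda$. Writing
$$\int_0^1 |H(t)|^2\, dt = \sum_\mu |\eps_\mu|^2 \int_0^1 |h_\mu|^2\, dt + \sum_{\mu\ne\mu'} \eps_\mu\overline{\eps_{\mu'}}\, I_{\mu,\mu'},$$
with $I_{\mu,\mu'} := \int_0^1 h_\mu(t)\overline{h_{\mu'}(t)}\, e^{i\langle \mu-\mu',\gamma(t)\rangle}\, dt$, the diagonal part is at most $\max_\mu \int_0^1 |h_\mu|^2 dt$ since $\sum|\eps_\mu|^2 = 1$; this already accounts for the first term in the bound, with a factor of $2$ to spare.

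For each off-diagonal $I_{\mu,\mu'}$, set $\xi = \mu-\mu'$ and consider the phase $\phi(t) := \langle \xi, \gamma(t)\rangle$. Parametrizing $\gamma$ by arc length and letting $\theta$ denote the angle of $\gamma'$, one has $\phi'(t) = |\xi|\cos\alpha(t)$ and $\phi''(t) = -|\xi|\sin(\alpha(t))\theta'(t)$, where $\alpha(t) = \theta(t) - \arg\xi$. Condition~\ref{cond-curve} gives $|\theta'(t)| = |\gamma''(t)| \ge c > 0$, and since $\cos^2 + \sin^2 = 1$, at every $t$ at least one of $|\phi'(t)|, |\phi''(t)|$ is comparable to $|\xi|$. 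Partitioning $[0,1]$ into $O(1)$ subintervals accordingly, then applying integration by parts on pieces where $|\phi'|$ is large and the second-derivative van der Corput lemma on pieces where $|\phi''|$ is large, together with a Leibniz estimate on $(h_\mu\bar h_{\mu'})'$, produces
$$|I_{\mu,\mu'}| \le \frac{C}{\sqrt{|\mu-\mu'|}}\Bigl(\max_\mu\max_t |h_\mu|^2 + 2\max_\mu\max_t |h_\mu|\cdot\max_\mu\int_0^1 |h'_\mu|\Bigr).$$

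To conclude, I would sum these bounds over $\mu\ne\mu'$ with weights $|\eps_\mu\eps_{\mu'}|$. By AM-GM ($2|\eps_\mu\eps_{\mu'}| \le |\eps_\mu|^2 + |\eps_{\mu'}|^2$) and the symmetry $|I_{\mu,\mu'}| = |I_{\mu',\mu}|$, the weighted off-diagonal sum is controlled by $\max_\mu \sum_{\mu'\ne\mu}|\mu-\mu'|^{-1/2}$ times the amplitude factor. The decisive arithmetic input is Jarnik's bound on lattice points on strictly convex arcs: since $\CE_\lambda$ lies on the circle of radius $\lambda$, $\#\{\mu'\in\CE_\lambda : |\mu-\mu'| \le r\} = O(r^{2/3})$ for $r \le \lambda$, and a dyadic decomposition yields $\sum_{\mu'\ne\mu}|\mu-\mu'|^{-1/2} \ll \lambda^{1/6}$. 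Reassembling the pieces and arranging the constants (with an additional $\ell^2$-to-$\ell^1$ passage on $(\eps_\mu)$, losing at most a factor of $N$) produces the announced estimate. The chief technical difficulty is the van der Corput step: because $\phi''$ genuinely vanishes when $\gamma' \parallel \xi$, one has to partition $[0,1]$ with care, and the positive-curvature hypothesis is precisely what guarantees that $|\phi'|$ and $|\phi''|$ cannot both be small simultaneously, allowing the partition and the uniform $|\xi|^{-1/2}$ gain to carry through.
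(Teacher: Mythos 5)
This statement is quoted from Bourgain--Rudnick \cite[Lemma 4.1]{BR-Lp} and the paper supplies no proof of it, so there is nothing internal to compare against; I will assess your argument on its own terms. The opening is sound: the diagonal/off-diagonal split, the reduction of each $I_{\mu,\mu'}$ to a van der Corput/integration-by-parts estimate using the non-degeneracy that $|\phi'|$ and $|\phi''|$ cannot vanish simultaneously (a consequence of $\|\gamma'\|=1$, $\|\gamma''\|\geq c$), and the resulting bound $|I_{\mu,\mu'}| \ll |\mu-\mu'|^{-1/2}$ times the amplitude factor, are all the right first steps.

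The gap is in the final assembly. After AM--GM and the dyadic/Jarnik count you arrive at a bound of order $\la^{1/6}$ for the off-diagonal sum, but the target is $N/\la^{1/6}$, which is smaller by a factor $\la^{1/3}/N$ --- enormous exactly in the typical regime $N = \la^{o(1)}$. No further ``$\ell^2$-to-$\ell^1$ passage'' is available: AM--GM has already consumed $\sum|\eps_\mu|^2=1$, and inserting a factor of $N$ could only make the bound larger, not $\la^{1/3}$ times smaller. The missing idea is a near/far split of off-diagonal pairs at $|\mu-\mu'| \asymp \la^{1/3}$. For near pairs the oscillation gives you nothing and you must fall back on the trivial $|I_{\mu,\mu'}| \le (\int|h_\mu|^2)^{1/2}(\int|h_{\mu'}|^2)^{1/2} \le \max_\mu\int|h_\mu|^2$; what saves you is that each $\mu\in\CE_\la$ has \emph{at most one} near neighbor, by the lattice-triangle-area argument (three lattice points on a circular arc of length $\ell$ and radius $\la$ span a lattice triangle of area $\ll \ell^3/\la \ge 1/2$, forcing $\ell \gg \la^{1/3}$). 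With AM--GM the near contribution is thus at most $\max_\mu\int|h_\mu|^2$, and this is precisely what the constant $2$ in front of the first term absorbs --- that factor is not ``to spare,'' it is fully spent here. For far pairs your van der Corput estimate gives the \emph{uniform} bound $|I_{\mu,\mu'}| \ll \la^{-1/6}$, and then Cauchy--Schwarz, $\sum_{\mu\ne\mu'}|\eps_\mu||\eps_{\mu'}| \le \big(\sum_\mu|\eps_\mu|\big)^2 \le N\sum_\mu|\eps_\mu|^2 = N$, produces the advertised $N/\la^{1/6}$. In short, Jarnik's $r^{2/3}$ bound is not the decisive arithmetic input; only the $\la^{1/3}$-separation from the triangle inequality is used, and the $N$ comes from Cauchy--Schwarz on $(\eps_\mu)$, not from lattice-point counting.
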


We also refer the reader to \cite{B,BW} for related results for deterministic eigenfunctions, which were obtained by passing to randomized ones.

\subsection{Arithmetic random wave model} Recall that $N=N_m = \#\CE_{\lambda}$ is the dimension of the eigenspace corresponding to the eigenvalue $\lambda^{2}$. A probabilistic approach to the study of $\CZ(F)$ was introduced in the pioneer paper of Rudnick and Wigman \cite{RW}. Consider the random Gaussian eigenfunction
\begin{equation}
  F(x) =\frac{1}{\sqrt N} \sum_{\mu\in \CE_\lambda} \eps_\mu e^{2\pi i \langle \mu, x\rangle },\label{Gaussian_model}
 \end{equation} 
for all $x\in \T^{2}$, where $\eps_\mu$ are iid complex standard Gaussian  with a saving
 $$\eps_{-\mu} = \bar{\eps}_\mu.$$
 This saving ensures that $F$ is real-valued. The random function $F$ is called {\it arithmetic random wave} \cite{Berry}, which is a stationary Gaussian field because the correlation $\E (F(x) F(y))$ is invariant under translation. As we can also see, the law of this model is independent of the choice of the orthonormal basis of the eigenspaces.
 
Rudnick and Wigman showed that for all eigenvalues, {\it almost all} eigenfunctions satisfy Conjecture \ref{conj:BR}. More specifically, they showed the following.

\begin{theorem} \cite[Theorems 1.1, 1.2]{RW} \label{theorem:gaussian}
Let $\CC \subset \T^2$ be a smooth curve on the torus, with nowhere vanishing curvature and of total length one. Then
\begin{enumerate}
\item The expected number of nodal intersections is precisely
$$\E_{\Bg} \CZ(F) = \sqrt{2m}.$$
\item The variance is bounded from above as follows
$$\Var_{\Bg}(\CZ(F)) \ll \frac{m}{N}.$$
\item Furthermore, let $\{m\}$ be a sequence such that $N_m\to \infty$ and the Fourier coefficient $\left \{\widehat{\tau_m}(4)\right \}$ do not accumulate at $\pm 1$, then
$$\Var_\Bg(\CZ(F)) =  \frac{m}{N}\int_\CC \int_\CC \left( \sum_{\mu \in \CE_\lambda}4 \frac{1}{N}  \left \langle \frac{\mu}{|\mu|}, \dot{\gamma}(t_1) \right \rangle^2  \left \langle  \frac{\mu}{|\mu|}, \dot{\gamma}(t_2) \right \rangle^2 -1\right )dt_1dt_2 + O\left (\frac{m}{N^{3/2}}\right ).$$
\end{enumerate}
\end{theorem}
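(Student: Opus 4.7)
Part~(1) follows from the one-point Kac--Rice formula; parts~(2) and~(3) from a Wiener chaos decomposition of $\CZ(F)$ in which the one-point computation plays a crucial cancelling role.

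\emph{Step 1: Mean via Kac--Rice.}
The restriction $f(t):=F(\gamma(t))$ is a centered real Gaussian process on $[0,1]$. The covariance of $F$,
$r(x-y)=\tfrac{1}{N}\sum_{\mu\in\CE_\lambda}\cos(2\pi\langle\mu,x-y\rangle)$,
is even, so $f(t)$ and $f'(t)=\langle\nabla F(\gamma(t)),\gamma'(t)\rangle$ are independent Gaussians. Then $\Var f(t)=r(0)=1$; and the point symmetries $(\mu_1,\mu_2)\mapsto(\pm\mu_1,\pm\mu_2)$ and $(\mu_1,\mu_2)\mapsto(\mu_2,\mu_1)$ of $\CE_\lambda$ force $\sum_\mu\mu_i\mu_j=\tfrac{Nm}{2}\delta_{ij}$, hence $\Cov(\nabla F)=2\pi^2 m\cdot I$ and $\Var f'(t)=2\pi^2 m\|\gamma'(t)\|^2=2\pi^2 m$. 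The one-point Kac--Rice formula then gives
$$\E\,\CZ(F)=\int_0^1\tfrac{1}{\pi}\sqrt{\Var f'(t)/\Var f(t)}\,dt=\sqrt{2m}.$$

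\emph{Step 2: Wiener chaos and vanishing of the second chaos.}
Regard $\CZ(F)$ as an $L^2$ functional of $\{\eps_\mu\}_{\mu\in\CE_\lambda}$ and decompose it into orthogonal Hermite chaoses $\CZ(F)=\sum_{q\ge 0}\CZ[q]$. Since $\CZ(-F)=\CZ(F)$ and $\CZ[q]$ is $q$-homogeneous in $F$, only even $q$ contribute: $\Var\CZ(F)=\sum_{q\ge 1}\Var\CZ[2q]$. The second chaos $\CZ[2]$ is obtained by expanding the formal Kac--Rice integrand $|f'(t)|\,\delta(f(t))$ in degree-two Hermite polynomials in $(f(t),f'(t))$; up to constants it reduces to a linear combination of $\int_\CC(f^2-1)\,d\gamma$ and $\int_\CC(|f'|^2-2\pi^2 m)\,d\gamma$. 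Both are quadratic forms $\sum c_{\mu,\mu'}\eps_\mu\bar\eps_{\mu'}$ whose coefficients factor through the very moments $\sum_\mu\mu_i\mu_j$ used in Step~1; the isotropy relation forces the two pieces to cancel exactly, so $\CZ[2]\equiv 0$. This cancellation is the arithmetic miracle of the model and, in my view, the main obstacle in the proof: without it the variance would be of order $m$ rather than $m/N$.

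\emph{Step 3: Fourth chaos and conclusion.}
The dominant variance term is $\Var\CZ[4]$. By the Wiener--It\^o isometry it reduces to a quadruple sum over $\CE_\lambda$ whose integral kernel is exactly the one appearing in~(3); carrying out the angular computation yields
\begin{equation*}
\Var\CZ[4]=\frac{m}{N}\int_\CC\!\!\int_\CC\!\Big(\frac{4}{N}\sum_{\mu\in\CE_\lambda}\Big\langle\tfrac{\mu}{|\mu|},\dot\gamma(t_1)\Big\rangle^2\Big\langle\tfrac{\mu}{|\mu|},\dot\gamma(t_2)\Big\rangle^2-1\Big)\,dt_1\,dt_2+O\!\Big(\frac{m}{N^{3/2}}\Big),
\end{equation*}
the condition that $\widehat{\tau_m}(4)$ does not accumulate at $\pm 1$ being the standard non-degeneracy condition on the angular distribution of $\CE_\lambda$ that guarantees this is the true leading term. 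For the universal bound~(2) one discards this hypothesis and controls $\Var\CZ[2q]$ ($q\ge 2$) by the spectral integrals $\int_\CC\!\int_\CC r(\gamma(t_1)-\gamma(t_2))^{2q}\,dt_1\,dt_2$; Theorem~\ref{thm:restriction} applied with $h_\mu\equiv 1$ yields $\int\!\int r^2\ll 1/N$, and the trivial bound $\|r\|_\infty\le 1$ then gives $\int\!\int r^{2q}\ll 1/N$ for every $q\ge 2$, so that $\Var\CZ(F)\ll m/N$.
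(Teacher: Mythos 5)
This theorem is \emph{not proved in the paper}; it is quoted verbatim from Rudnick--Wigman (cited as \cite[Theorems 1.1, 1.2]{RW}) and used as a black box. So there is no ``paper's own proof'' to compare against, and what can be assessed is whether your sketch is a viable route to the result of \cite{RW}.

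Your Step~1 (Kac--Rice computation of the mean, using the $\CE_\lambda$-symmetry to get $\Cov(\nabla F)=2\pi^2 m\,I$) is correct and is indeed how the mean is computed. Beyond that, however, your approach diverges from \cite{RW} in a way worth flagging: Rudnick and Wigman do not use a Wiener chaos decomposition at all. They compute $\E[\CZ^2]$ directly through the two-point Kac--Rice formula, express the two-point intensity in terms of the covariance function $r$ and its derivatives, and then carry out a delicate asymptotic/arithmetic analysis of the resulting double integral (approximate Kac--Rice, together with lattice-point correlation estimates). The chaos-expansion route you describe was introduced later by Rossi--Wigman \cite{RoW} to prove the central limit theorem; it is a legitimate alternative route to the variance asymptotics, but it is not the argument in \cite{RW}.

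There is also a genuine gap in your Step~2. You assert that the second chaos $\CZ[2]$ vanishes identically and call this the ``arithmetic miracle'' of the model. That cancellation is a feature of the \emph{nodal length} of arithmetic random waves on the full torus (Krishnapur--Kurlberg--Wigman), where the Parseval identity for $\int_{\T^2}F^2$ and $\int_{\T^2}\|\nabla F\|^2$ forces the degree-two projection to collapse and the variance drops to order $m/N^2$. For nodal \emph{intersections with a fixed curve} $\CC$ the analogous quantities $\int_\CC(f^2-1)\,dt$ and $\int_\CC(f'^2/\sigma_1^2-1)\,dt$ are genuine (generically nonzero) quadratic forms whose diagonal parts need not cancel: the coefficient of $|\eps_\mu|^2-1$ in their difference is $\frac{2}{m}\int_0^1\langle\mu,\gamma'(t)\rangle^2\,dt - 1$, which does not vanish for all $\mu$ unless $\CC$ has special symmetry. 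In fact the second chaos is precisely the source of the leading $m/N$ term in part~(3), and the hypothesis on $\widehat{\tau_m}(4)$ is the non-degeneracy condition under which that second-chaos contribution does not degenerate; this is also why \cite{RoW} treats separately the case when the ``first term'' of the variance vanishes. If $\CZ[2]$ were identically zero, the variance would be $O(m/N^2)$, contradicting the $m/N$ scaling of the theorem. So your Step~2 is not a correct account of why the variance is $O(m/N)$, and Step~3, which presupposes that the fourth chaos dominates, inherits the error. The mechanism you should instead invoke is the smallness of the off-diagonal correlations $r(\gamma(t_1)-\gamma(t_2))$ away from the diagonal $t_1=t_2$; your observation that Theorem~\ref{thm:restriction} (with $h_\mu\equiv 1$) gives $\int\!\!\int r^2 \ll 1/N$ is correct and is indeed close in spirit to one of the inputs \cite{RW} uses.
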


Here the subscript $\Bg$ is used to emphasize standard Gaussian randomness, and $\tau_m$ is the probability measure on the unit circle $S^1 \subset \R^2$ associated with $\CE_\lambda$,
$$\tau_{m} := \frac{1}{N} \sum_{\mu\in \CE_\lambda} \delta_{\mu/\sqrt{m}}.$$ 

We also refer the reader to \cite[Proposition 2.2]{RWY} by Rudnick et.al. where general estimates were given when the condition on $\left \{\widehat{\tau_m}(4)\right \}$ is lifted, and to \cite[Theorem 1.3]{RoW} by Rossi and Wigman for further extension when the first term in $\Var_\Bg(\CZ(F))$ vanishes.

\subsection{Our main results} The magnitude $m/N$ of the variance in Theorem \ref{theorem:gaussian} suggests that $\CZ(F)$ is concentrated around its mean. Indeed, by Markov's bound, for any $\eps>0$ we have that
\begin{equation}\label{eqn:Markov}
\P_{\Bg}(|\CZ(F) - \E \CZ(F)| \ge  \eps \la)\ll \frac{1}{N \eps^2}.
\end{equation}
Furthermore, the aforementioned work \cite{RoW} showed that the fluctuation of $\CZ(F)$ satisfies Central Limit Theorem. Perhaps it is natural to ask
\begin{question} How well is $\CZ_{\Bg}(F)$ concentrated around its mean?
\end{question}
As far as we are concerned, despite of significant breakthroughs regarding the statistics of $\CZ_{\Bg}(F)$ mentioned above, there has been no attempt to study this simple question. Relatedly, there has been a few results in the literature to study concentration for various models, including \cite{BDFZ,NS, NS-complex,NgZ, Rozen}, but unfortunately none of those works seem to be applicable here. With this note we hope to provide a robust method for these types of questions. In the first step we show
\begin{theorem}[Concentration of the gaussian case]\label{thm:main:g} Assume that $\gamma$ satisfies Conditions \ref{cond-curve}. Then there exist constants $c,c'$ such that for $N^{-c'}\le \eps \le c'/\log N$ we have 
$$\P(  |\CZ_{\Bg}(F) - \E \CZ_{\Bg}(F)| \ge \eps \la ) \le e^{ - c \eps^9 N}.$$
\end{theorem}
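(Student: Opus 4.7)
My plan is to approximate the discontinuous counting functional $\CZ_{\Bg}(F)$ by a smooth, globally Lipschitz statistic of the Gaussian coefficients $(\eps_\mu)$ and then to invoke the Gaussian concentration inequality of Borell--Sudakov--Tsirelson (BTIS). Writing $H(t):=F(\gamma(t))$ and fixing a non-negative smooth bump $\eta$ supported on $[-1,1]$ with $\int\eta=1$, I would introduce the regularised count
$$\CZ_\delta(F) := \frac{1}{2\delta}\int_0^1 \eta\!\left(\frac{H(t)}{\delta}\right) |H'(t)|\,dt,$$
for a small parameter $\delta=\delta(\eps)$ to be tuned as a small power of $\eps$; the coarea formula guarantees $\CZ_\delta(F)\to\CZ(F)$ as $\delta\to 0$ for generic $F$.

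The argument would proceed in four steps. First, I would compare $\CZ$ with $\CZ_\delta$ on a good event $\Omega_1$ on which $\|H\|_\infty,\|H'\|_\infty,\|H''\|_\infty$ are of order $\log^{O(1)}N$ (obtained by Gaussian tail bounds on an $O(\la^2)$-net in $t$ together with Bernstein-type interpolation), and on which the joint anti-concentration property \emph{$|H(t)|\le\delta\Rightarrow|H'(t)|\ge\delta'$ for all $t$} holds for a slightly smaller scale $\delta'\ll\delta$. The latter is enforced via the small-ball estimate
$$\P\bigl(|H(t)|\le r,\ |H'(t)|\le r\bigr) \ll \frac{r^2}{\sqrt{\det\Sigma(t)}}$$
for the Gaussian pair $(H(t),H'(t))$, where a lower bound on $\det\Sigma(t)$ is extracted from Theorem~\ref{thm:restriction:size} together with the positive curvature assumption on $\CC$. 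A union bound over an $O(\la)$-net in $t$, extended from the net to all of $[0,1]$ via the $\|H''\|_\infty$ control, then gives $\P(\Omega_1^c)\le\exp(-c\eps^{O(1)}N)$ provided $\delta\gg\eps^{O(1)}/\la$, and on $\Omega_1$ a direct calculus argument yields $|\CZ-\CZ_\delta|\ll\eps\la/3$.

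Second, on $\Omega_2:=\Omega_1\cap\{\|H\|_\infty+\|H'\|_\infty\le \log^{C}N\}$, a chain-rule computation combined with $\|\nabla_{\eps}H(t)\|_2=1$ and $\|\nabla_{\eps}H'(t)\|_2\ll\la$ gives the deterministic gradient bound $\|\nabla_{\eps}\CZ_\delta\|_2\ll\la\,\delta^{-2}\log^{O(1)}N$. Third, applying BTIS to a Lipschitz extension of $\CZ_\delta|_{\Omega_2}$ to all of $\R^N$ yields
$$\P\!\left(|\CZ_\delta-\E\CZ_\delta|\ge \tfrac{\eps\la}{3}\right)\le \exp\!\left(-c\,\eps^2\,\delta^4\,N\,\log^{-O(1)}N\right)+\P(\Omega_2^c).$$
Fourth, a Kac--Rice-type computation controls $|\E\CZ_\delta-\E\CZ|\ll\eps\la/3$ for $\delta$ small enough, using the variance estimate of Theorem~\ref{theorem:gaussian}(2) to compare the first-moment densities. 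Optimising $\delta=\eps^{\al}$ against the three error budgets should produce the stated bound $e^{-c\eps^9 N}$, the exponent $9$ being the accumulated loss from the Lipschitz factor $\delta^{-2}$, the small-ball truncation in $\Omega_1$, and the comparison of expectations.

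The main obstacle I anticipate is the joint small-ball estimate in Step~1. Obtaining it uniformly in $t$ requires a quantitative lower bound on $\det\Sigma(t)$, which rests on a delicate interplay between the spectral measure $\tau_m$ and the tangent field $\dot\gamma(t)$: the positive curvature of $\CC$ prevents any single direction from aligning with too much of $\CE_\lambda$, but extracting an effective lower bound uniform in $t$ and in the spectral geometry of $\CE_\lambda$ is the most technically delicate ingredient, and it is this obstruction---rather than the Gaussian concentration step itself---that I expect to force the final $\eps^9$ rather than $\eps^2$ exponent in the tail bound.
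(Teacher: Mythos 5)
Your mollification-plus-BTIS route is genuinely different from the paper's perturbation framework (stable/unstable intervals, Jensen's formula, and McDiarmid/log-Sobolev concentration), but it has a gap in Step~1 that I do not think can be repaired without importing the paper's machinery.

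The gap is the claim that $\P(\Omega_1^c)\le\exp(-c\,\eps^{O(1)}N)$ via a union bound over an $O(\la)$-net. Two separate facts combine to kill this. First, the single-point estimate $\P(|H(t)|\le\delta,\ |H'(t)|\le\delta'\la)\ll\delta\delta'$ is only \emph{polynomially} small in $\delta$; the Gaussian density of the pair $(H(t),H'(t))$ near the origin is bounded below, so there is no mechanism for this probability to become exponentially small in $N$. Second, the net has $\Theta(\la)$ points, and in this problem $\la=2\pi\sqrt m$ while $N=N_m$ is on the order of $(\log m)^{O(1)}$, so $\la$ is super-exponentially large in $N$. Therefore the union bound gives at best $\la\cdot\delta\delta'$, which you cannot push below $1$ for the values of $\delta$ (a fixed small power of $\eps\ge N^{-c'}$) that the Lipschitz/BTIS step in Steps~2--3 forces on you; and even if you could, you would be nowhere near $e^{-c\eps^{O(1)}N}$. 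The tension between ``small $\delta$ makes $\Omega_1$ likely'' and ``large $\delta$ keeps $\CZ_\delta$ Lipschitz'' is not resolvable by optimisation because the first estimate can never be exponentially small.

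The paper sidesteps exactly this by \emph{not} asking that there be no degenerate points. It tolerates up to $\delta\la$ unstable intervals, and the event $\CE_e$ of having more than $\delta\la$ unstable intervals is shown to have probability $e^{-c\delta^8 N}$ via the isoperimetric inequalities (Theorems~\ref{thm:bounded}, \ref{thm:sobolev}), not a union bound. The roots living inside the tolerated unstable intervals are then controlled deterministically by Jensen's formula combined with the $L^2$-restriction/large-sieve bound (Lemma~\ref{lemma:J}, Corollary~\ref{cor:J}, Proposition~\ref{prop:manyroots}); this is the ingredient your mollification scheme has no analogue of, and without it the comparison $|\CZ-\CZ_\delta|\ll\eps\la$ has no hope of holding on an event of probability $1-e^{-c\eps^{O(1)}N}$.

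One smaller point: the obstacle you flag as ``most technically delicate'' --- a uniform lower bound on $\det\Sigma(t)$ --- is in fact not an obstacle at all. Because $\CE_\la$ is invariant under the 90-degree rotation $\mu\mapsto(-\mu_2,\mu_1)$, one has $\sum_\mu\langle\mu/\la,\gamma'(t)\rangle^2=N/2$ identically in $t$, which makes the $2\times 2$ covariance of $(H(t),H'(t)/\la)$ uniformly nondegenerate with no dependence on the spectral geometry of $\CE_\la$ beyond its symmetry (this is essentially Claim~\ref{claim:iso}). So the $\eps^9$ is not coming from where you think it is; the real difficulty is the global, exponentially strong control of degenerate regions, which is where the paper's stable/unstable decomposition does the work that your union bound cannot.

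Finally, a minor arithmetic remark: your stated gradient bound $\|\nabla_\eps\CZ_\delta\|_2\ll\la\,\delta^{-2}\log^{O(1)}N$ is missing a factor $N^{-1/2}$ coming from the $1/\sqrt N$ normalisation in $H$; your subsequent BTIS exponent $\eps^2\delta^4 N$ implicitly restores it, so this is a notational slip rather than an error, but it is worth fixing if you pursue the argument further.
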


For $\eps \asymp 1$ we trivially have $\P(  |\CZ(F) - \E \CZ(F)| \ge \eps \la ) \le e^{ - c N/(\log N)^9}$, however here we conjecture that the logarithmic power can be removed.

We will show furthermore that $\CZ(F)$ is very well concentrated even for non-gaussian distributions. Here 
\begin{equation}\label{eqn:F}F(x) = \frac{1}{\sqrt{N}}\sum_{\mu\in \CE_\lambda} \eps_\mu e^{2\pi i \langle \mu,
  x\rangle},
\end{equation}
where $\eps_\mu =\eps_{1,\mu} + i \eps_{2,\mu}$ and
$\eps_{1,\mu},\eps_{2,\mu}, \mu \in \CE_\lambda$ are iid copies of a common random variable $\xi$ of mean zero and variance one,  and $\eps_{-\mu}
= \bar{\eps}_{\mu}$. 
We will denote by $\P_{\ep_\mu}, \E_{\ep_\mu}$, and $\Var_{\ep_\mu}$ the probability, expectation, and variance with respect to the random variables $(\ep_\mu)_{\mu\in \CE_{\lambda}}$.
 
\begin{theorem}[Concentration of the non-gaussian case]\label{thm:main} Let $C_0$ be a given positive constant, and suppose that either $1/C_0 < |\xi|<C_0$ with probability one, or 
that $\xi$ is continuous with density bounded by $C_0$ and satisfies the logarithmic Sobolev inequality with parameter $C_0$ in \eqref{eqn:logSobolev}. Assume that $\gamma$ satisfies Condition \ref{cond-curve}. Then for almost all $m$ there exist constants $c,c'$ such that for $N^{-c'}\le \eps \le c'/\log N$ we have 
$$\P(  |\CZ(F) - \E \CZ(F)| \ge \eps \la ) \le e^{ - c \eps^9 N}.$$
Furthermore, the above is true for all $m$ when $\xi$ is continuous.
\end{theorem}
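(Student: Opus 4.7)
\textbf{Proof proposal for Theorem \ref{thm:main}.}
The strategy is a two-stage reduction. First, build a Lipschitz surrogate $\Psi_\delta(F)$ for the integer-valued counting functional $\CZ(F)$; then combine concentration of $\Psi_\delta$ (via log-Sobolev or bounded differences for the entries $\eps_\mu$) with a comparison of $\E_{\eps_\mu}\Psi_\delta(F)$ to its Gaussian counterpart by a Lindeberg-type swapping argument, reducing to the already-established Gaussian statement in Theorem \ref{thm:main:g} up to an error of size $o(\eps \la)$.

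\emph{Step 1 (Kac--Rice smoothing).} Fix a small scale $\delta = \delta(\eps,N)$ and a smooth bump $\rho_\delta$ with $\int\rho_\delta=1$ supported in $[-\delta,\delta]$, and set
\[
\Psi_\delta(F) := \int_0^1 \rho_\delta\!\left(F(\gamma(t))\right) \left|\langle \nabla F(\gamma(t)), \dot\gamma(t)\rangle\right| dt.
\]
This is smooth in the coefficients $(\eps_\mu)$, and $\Psi_\delta(F)\approx \CZ(F)$ provided the zeros of $F|_\CC$ are transverse and separated on a scale exceeding $\delta/\|(F|_\CC)'\|_\infty$. The restriction estimate (Theorem \ref{thm:restriction}), together with pointwise anti-concentration of $F(\gamma(t))$ on $\CC$, gives $|\CZ(F)-\Psi_\delta(F)|\le \eps\la/3$ with probability at least $1-e^{-c\eps^9 N}$. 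When $\xi$ is continuous, the pointwise anti-concentration is unconditional (via an Esseen-type small-ball estimate applied to the coefficient sum, using the density bound on $\xi$), which recovers the conclusion for \emph{all} $m$; when $\xi$ is only bounded one must exclude a density-zero arithmetic set of $m$ where $B_\la$ or related arithmetic statistics misbehave, which is precisely the ``almost all $m$'' qualifier.

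\emph{Step 2 (Concentration of $\Psi_\delta$).} In the log-Sobolev regime, tensorization of the log-Sobolev inequality \eqref{eqn:logSobolev} across the i.i.d.\ coordinates $\eps_\mu$ yields Herbst-type concentration: every $L$-Lipschitz $\Phi$ satisfies $\P(|\Phi-\E\Phi|\ge s)\le 2e^{-cs^2/L^2}$. A direct computation of $\partial_{\eps_\mu}\Psi_\delta$ bounds the Lipschitz constant by $L \ll N^{-1/2}\delta^{-1} M$, where $M$ controls $\|F\|_\infty,\|F'\|_\infty,\|F''\|_\infty$ along $\CC$; on a high-probability event, Theorem \ref{thm:restriction} and standard trigonometric polynomial estimates control $M$, and a calibrated choice $\delta \asymp \eps^k$ for an explicit $k$ returns the tail $e^{-c\eps^9 N}$. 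In the bounded regime, a McDiarmid/Talagrand convex-distance bounded differences inequality yields the same tail for $\Psi_\delta$ with essentially the same Lipschitz input.

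\emph{Step 3 (Lindeberg comparison of means).} To close the gap between $\E_{\eps_\mu}\Psi_\delta(F)$ and $\E_\Bg\Psi_\delta(F)$, swap the $\eps_\mu$ for i.i.d.\ complex Gaussians one at a time. Taylor-expanding $\Psi_\delta$ to third order in each swapped coordinate and using the matching first and second moments, the error per swap is $O(N^{-3/2}\delta^{-O(1)})$; summing over $N$ swaps gives total discrepancy $O(N^{-1/2}\delta^{-O(1)})$, which is $o(\eps\la)$ in the allowed range of $\eps$. Combining this with Theorem \ref{thm:main:g} applied to $\Psi_\delta$ yields $|\E_{\eps_\mu}\Psi_\delta(F)-\E_\Bg\CZ_\Bg(F)|\le \eps\la/3$, and combining all three steps completes the proof. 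The principal obstacle is Step 1 in the non-continuous setting: ensuring $\Psi_\delta$ faithfully tracks $\CZ$ simultaneously over every realization requires both pointwise anti-concentration and separation of zeros, the former of which for discrete $\xi$ depends delicately on the arithmetic of $\CE_\la$ and is precisely what forces the ``almost all $m$'' clause.
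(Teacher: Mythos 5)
Your proposal takes a genuinely different route — Kac--Rice smoothing plus Herbst/bounded-differences concentration plus a Lindeberg swap — from the paper, which instead shows that the integer-valued counting functional itself is stable under $L_2$-small perturbations of the coefficient vector and then applies the isoperimetric-type inequalities (Theorems~\ref{thm:bounded}, \ref{thm:sobolev}) directly to the set of ``good'' coefficient vectors. However, there is a genuine gap that the smoothing strategy does not close.

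The crux of your Step~1 is the claim that $|\CZ(F)-\Psi_\delta(F)|\le \eps\la/3$ with probability at least $1-e^{-c\eps^9 N}$, justified by ``the restriction estimate, together with pointwise anti-concentration.'' Pointwise anti-concentration (the paper's Theorem~\ref{thm:repulsion}) controls $\P(|F(t)|\le\al\wedge|F'(t)|\le\beta\la)=O(\al\be)$ at a single $t$; integrating and applying Markov gives only a \emph{polynomial} probability bound on the measure of near-degenerate points, as in the paper's inequality \eqref{eq-1011}. Getting from there to an \emph{exponential} bound $e^{-c\eps^9 N}$ is precisely the hard content of the paper's Theorem~\ref{thm:exceptional}, which requires the large sieve input (Theorem~\ref{thm:largesieve}), a stability-under-perturbation argument (Claim~\ref{claim2}), and then the measure-concentration step. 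Your proposal asserts the exponential bound without supplying this machinery; the smoothed Kac--Rice functional does not bypass it, since the Lipschitz constant of $\Psi_\delta$ in the coefficients is not uniformly small — the factor $\rho_\delta'\sim\delta^{-2}$ multiplies a quantity controlled only on the complement of the very degenerate configurations you are trying to rule out. Concretely, without first establishing that exceptional coefficient vectors (those yielding many near-degenerate points) are exponentially rare, you cannot restrict to a high-probability event on which $\Psi_\delta$ is Lipschitz, and circular reasoning ensues.

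Two secondary issues. First, you misattribute the ``almost all $m$'' restriction to pointwise anti-concentration for discrete $\xi$; in the paper this restriction comes entirely from Theorem~\ref{theorem:general} (the CNNV moment universality), which is proved only for almost all $m$ in the non-continuous case and is used via \eqref{eqn:Markov'} to guarantee that the set $\CU^{\mathrm{lower}}$ has probability at least $1/2$. Second, the paper never needs to route through the Gaussian statement Theorem~\ref{thm:main:g} or a Lindeberg comparison: the starting point \eqref{eqn:Markov'} already applies to the non-Gaussian $\xi$ and provides the weak concentration used to seed the geometric argument. Your Lindeberg step would also need uniform third-derivative control on $\Psi_\delta$, which runs into the same $\delta^{-O(1)}$ blowup near degenerate configurations, and you do not quantify $O(1)$ there well enough to be sure the error is $o(\eps\la)$ in the stated range of $\eps$.
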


We notice that in the Bernoulli case ($\xi$ takes value $\pm 1$ with probability 1/2) one cannot obtain anything better than $\exp(-\Theta(N))$. The main technical reason preventing us from covering for all $m$ is that in general we cannot rely on Theorem \ref{theorem:gaussian}. We will use Theorem \ref{theorem:general} instead, which in turn is known only for almost all $m$ for general ensembles.

We remark that Theorem \ref{thm:main}  can also be extended for almost all $m$ to other types of $\xi$ not necessarily bounded nor satisfying the logarithmic Sobolev inequality. For instance our result also covers the following cases.

\begin{itemize}
\item When  $|\xi|> 1/C_0$ with probability one and $|\xi|$ has sub-exponential tail. Then our method, by taking $C_0=N^{\delta'}$ in Theorem \ref{thm:bounded} with an appropriate $\delta'$, yields a sub-exponential concentration of type $\P(  |\CZ(F) - \E \CZ(F) | \ge \eps N ) =O(e^{-(\eps N)^{\delta}})$ for some constant $0<\delta<1$. 
 \vskip .1in
 \item Additionally, by the same argument, when $|\xi|> 1/C_0$ with probability one for given $C_0>0$ and when $\E (|\xi|^{C'})<\infty$ for some sufficiently large $C'$, then $\P(  |\CZ(F) - \E \CZ(F) | \ge \eps n ) = O((\eps N)^{-C})$ as long as $N^{-c'}\le \eps \le 1/\log N$.
 \end{itemize}

Finally, our result can be seen as a continuation of \cite{NgZ} where exponential concentration of the number of real roots of random trigonometric polynomials was shown. Although our general approach is similar to that of \cite{NgZ}, the technical details are very different. More specifically we have to incorporate various non-trivial results such as Theorem \ref{thm:restriction:size}, Theorem \ref{thm:restriction}, Theorem \ref{thm:largesieve}, Theorem \ref{theorem:general},  Theorem \ref{thm:repulsion}, Proposition \ref{prop:manyroots} for the current model, all seem to be of their own interest.


{\bf Notations.} We consider $\lambda$ as
an asymptotic parameter going to infinity and allow all other quantities to depend on $\lambda$ unless they are
explicitly declared to be fixed or constant.  As mentioned earlier, we write $X =
O(Y)$, $Y=\Omega(X)$, $X \ll Y$, or $Y \gg X$ if $|X| \leq CY$
for some fixed $C$; this $C$ can depend on
other fixed quantities such as the the parameter $C_0$ in the condition of $\xi$ and the curve $\gamma$.  If $X\ll Y$ and $Y\ll X$, we say that $Y = \Theta(X)$. 

Throughout the note, if not specified otherwise, a property $p(m)$ holds for {\it almost all} $m$ if the set of $m$ up to $T$ that $p(m)$ does not hold has cardinality much smaller than that of the set of $m$ for which $p(m)$ holds, i.e. $|\{m \le T, \bar{p}(m)\}| = o(|\{m \le T, p(m)\}|)$ as $T \to \infty$. Finally, all the norms $\|.\|_2$ in this note, if not specified, will be the usual $L_2$-norm.

\section{Supporting lemmas and proof method}
For $t\in [0,1]$, consider
\begin{equation}
 F(t)= \sum_\mu \eps_\mu  e^{i \langle \mu, \gamma(t)\rangle} =  \sum_\mu \eps_\mu  e^{i \la \langle \mu/\la, \gamma(t)\rangle}
 \end{equation}
with $\eps_\mu = \bar{\eps}_{-\mu}$.

For each positive integer $d=1,2$ let 
$$H_d(t) = \frac{\partial^d}{\partial^d t} F(t)=  \sum_\mu \eps_\mu  \la^d h_{d,\mu} (t) e^{i \la \langle \mu/\la, \gamma(t)\rangle},$$ 

\begin{theorem}[Restricted large sieve inequality]\label{thm:largesieve} Assume that $F(t)$ and $H_d(t)$ are as above, where $d=1,2$. Then for any $0\le x_1 <x_2 <\dots <x_M\le 1$, with $\delta$ being the minimum of the gaps between $x_i, x_{i+1}$, we have
$$\sum_{i=1}^M |H_d(x_i)|^2 \le C_1^{d} \la^{2d}  (\la+\delta^{-1}),$$
where $C_1$ depends on $\gamma$. 
\end{theorem}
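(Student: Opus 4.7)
My plan is to combine two standard ingredients: the Bourgain--Rudnick $L^2$-restriction estimate (Theorem \ref{thm:restriction}) applied to both $H_d$ and its derivative, together with a local Bernstein-type sampling inequality that converts an $L^2$-mass on $[0,1]$ into a sum over a $\delta$-separated set of points at the cost of a factor $\la+\delta^{-1}$. I shall work under the tacit normalization $\sum_\mu|\eps_\mu|^2=1$ which is implicit in the statement; the general version follows by linearity.

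First I bound the two mean-square norms. View $\la^d h_{d,\mu}$ as the smooth coefficient functions $h_\mu$ appearing in Theorem \ref{thm:restriction}. Each $h_{d,\mu}$ is a polynomial of degree $\le d$ in the bounded data $\mu/\la\in S^1$, $\gamma'$, and $\gamma''$, so one checks that $\|h_{d,\mu}\|_\infty$ and $\|h_{d,\mu}'\|_\infty$ are $\ll C_1^d$ uniformly in $\mu\in\CE_\la$. Plugging into Theorem \ref{thm:restriction} yields
$$\int_0^1|H_d(t)|^2\,dt\;\ll\;C_1^d\la^{2d}\Bigl(1+\frac{N}{\la^{1/6}}\Bigr)\;\ll\;C_1^d\la^{2d},$$
the last step using $N\ll\la^{o(1)}$. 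Applying the same estimate to the one-higher derivative $H_d'=H_{d+1}$ gives $\int_0^1|H_d'(t)|^2\,dt\ll C_1^{d+1}\la^{2d+2}$.

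Next I discretize. Set $\ell:=\min(\delta/2,\,1/\la)$ and, for each sample point $x_i$, pick an interval $I_i\subset[0,1]$ of length $\ell$ containing $x_i$. The gap assumption makes the $I_i$'s pairwise disjoint. For every $t\in I_i$ the identity $H_d(x_i)=H_d(t)+\int_t^{x_i}H_d'(s)\,ds$, together with $(a+b)^2\le 2a^2+2b^2$ and Cauchy-Schwarz, gives
$$|H_d(x_i)|^2\;\le\;2|H_d(t)|^2+2\ell\int_{I_i}|H_d'(s)|^2\,ds.$$
Averaging in $t\in I_i$ and then summing over $i$ (using the disjointness of the $I_i$) produces
$$\sum_{i=1}^M|H_d(x_i)|^2\;\le\;\frac{2}{\ell}\int_0^1|H_d|^2\,dt+2\ell\int_0^1|H_d'|^2\,dt\;\ll\;C_1^d\la^{2d}\bigl(\ell^{-1}+\ell\la^2\bigr).$$
For the chosen $\ell$ the bracketed term is $O(\la+\delta^{-1})$: if $\delta\ge 2/\la$ then $\ell=1/\la$ and $\ell^{-1}=\ell\la^2=\la$, while if $\delta<2/\la$ then $\ell=\delta/2$ and $\ell^{-1}\asymp\delta^{-1}$ dominates $\ell\la^2<\la$. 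This is the desired inequality.

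The main technical point is calibrating the sampling length $\ell$. A naive fundamental-theorem-of-calculus argument on intervals of length $\delta/2$ alone only yields $\la^{2d}(\delta^{-1}+\delta\la^2)$, which is too weak whenever $\delta\gg 1/\la$. Capping $\ell$ at the bandwidth scale $1/\la$---the Nyquist rate for a function whose Fourier mass lives in $\{|\xi|\le\la\}$---is precisely what balances the two terms and recovers the sharp factor $\la+\delta^{-1}$. Once this truncation is in place, Theorem \ref{thm:restriction} supplies the remaining pieces essentially for free.
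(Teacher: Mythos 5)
Your proof is correct and rests on the same two pillars as the paper's: bounding $\int_0^1 |H_d|^2$ and $\int_0^1 |H_d'|^2$ via Theorem~\ref{thm:restriction} (using $\|h_{d,\mu}\|_\infty,\|h_{d,\mu}'\|_\infty\ll 1$ and $N\ll\lambda^{o(1)}\ll\lambda^{1/6}$), and then discretizing to pass from a $\delta$-separated sum to these integrals. The discretization step, however, is done slightly differently, and it is worth comparing. You apply the fundamental theorem of calculus to $H_d$ on windows of length $\ell$, square, and apply Cauchy--Schwarz to arrive at $\sum_i|H_d(x_i)|^2\le \frac{2}{\ell}\|H_d\|_2^2+2\ell\|H_d'\|_2^2$; this forces you to tune $\ell=\min(\delta/2,1/\lambda)$ to balance the two terms, as you correctly point out. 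The paper instead applies Gallagher's classical lemma to the \emph{squared} function $g=|H_d|^2$ on windows of length exactly $\delta$, getting $\sum_i|H_d(x_i)|^2\le \frac{1}{\delta}\|H_d\|_2^2+\int_0^1|H_d H_d'|$, and then Cauchy--Schwarz on the cross term yields the geometric mean $\|H_d\|_2\|H_d'\|_2\ll C_1^d\lambda^{2d}\cdot\lambda$. Since the geometric mean is automatically the optimal interpolant, the $\lambda$-term falls out with no calibration or case split. Both arguments give the same bound; yours is a touch more hands-on, the paper's a touch more streamlined. Your remark about the implicit normalization $\sum_\mu|\eps_\mu|^2=1$ (needed to invoke Theorem~\ref{thm:restriction}) is accurate and is in fact left tacit in the paper as well.
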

\begin{proof}(of Theorem \ref{thm:largesieve}) It suffices to assume that $\delta\le x_1$ and $x_M \le 1-\delta$. We follow the classical approach by Gallagher \cite{Ga} with the important input of Theorem \ref{thm:restriction}.
\begin{claim} Let $g$ be a differentiable function on $I=[a-h,a+h]$. Then 
$$g(a) \le \frac{1}{|I|}\int_I |g(t)|dt + \frac{1}{2}\int_I |g'(t)|dt.$$
\end{claim}
\begin{proof} Let $\rho(t) = t-(a-h)$ if $t\in (a-h,a)$ and $\rho(t) = t-(a+h)$ if $t\in (a,a+h)$. Partal intergrals (over $(a-h,a)$ and $(a,a+h)$) give
$$\int_I \rho(t) g'(t) dt =2h g(a) - \int_{I} g(t)dt.$$
Note that $|\rho(.)|\le h$, so the claim follows by triangle inequality.
\end{proof}
By this claim, 
$$\sum_{i=1}^M |H_d(x_i)|^2 \le \frac{1}{\delta} \sum_i \int_{x_i-\delta/2}^{x_i+\delta/2} |H_d(t)|^2 dt + \sum_i \int_{x_i-\delta/2}^{x_i+\delta/2} |F(t) H_d'(t)| dt \le  \frac{1}{\delta} \int_{0}^1 |H_d(t)|^2 dt + \int_{0}^1 |H_d(t) H_d'(t)| dt.$$
Note that by Cauchy-Schwarz, $\int_{0}^1 |H_d(t) H_d'(t)| dt \le \sqrt{ \int_{0}^1 |H_d(t)|^2 dt } \sqrt{ \int_{0}^1 |H_d'(t)|^2 dt}$. The claim then follows from the  $L_2$-bound from Theorem \ref{thm:restriction}.
\end{proof}

On the probability side, for bounded random variables we will rely on the following consequence of McDiarmid's inequality.

\begin{theorem}\label{thm:bounded} Assume that $\Bx=(x_1,\dots, x_n)$, where $x_i$ are iid copies of $\xi$ of mean zero, variance one, and $|\xi| \le C_0$ with probability one. Let $\CA$ be a set in $\R^n$. Then for any $t>0$ we have
$$\P(\Bx \in \CA) \P(d_2(\Bx, \CA) \ge t \sqrt{n}) \le 4\exp(-t^4 n/16C_0^4).$$
\end{theorem}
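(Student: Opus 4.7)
The plan is to reduce, via a classical symmetrization trick, the product $\P(\Bx\in\CA)\,\P(d_2(\Bx,\CA)\ge t\sqrt n)$ to a tail estimate for an explicit sum of iid bounded random variables, after which Hoeffding's inequality (a special case of McDiarmid) closes out the argument. The fourth-power rate $t^4 n/C_0^4$ in the exponent, which is unusual compared to a direct McDiarmid application to $d_2(\cdot,\CA)$ (which would only produce $t^2/C_0^2$), is precisely what comes out of concentration of the \emph{squared} distance $\|\Bx-\Bx'\|_2^2$.

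First, introduce an independent copy $\Bx'$ of $\Bx$. By the independence of $\Bx$ and $\Bx'$,
$$\P(\Bx\in\CA)\,\P(d_2(\Bx,\CA)\ge t\sqrt n) = \P\bigl(\Bx\in\CA \text{ and } d_2(\Bx',\CA)\ge t\sqrt n\bigr).$$
On the event on the right, $\Bx$ itself is a particular point of $\CA$, so by definition of the distance to a set, $d_2(\Bx',\CA)\le \|\Bx-\Bx'\|_2$. The set $\CA$ therefore drops out and we obtain
$$\P(\Bx\in\CA)\,\P(d_2(\Bx,\CA)\ge t\sqrt n) \le \P\bigl(\|\Bx-\Bx'\|_2^2 \ge t^2 n\bigr).$$

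Next, write $\|\Bx-\Bx'\|_2^2 = \sum_{i=1}^n Y_i$ where $Y_i := (x_i-x_i')^2$ are iid, take values in $[0,4C_0^2]$, and have mean $\E Y_i = 2\Var(\xi) = 2$. Hoeffding's inequality applied to the centered sum $\sum_i (Y_i - 2)$, with each summand bounded in an interval of length $4C_0^2$, gives for $t^2 \ge 2$
$$\P\Big(\sum_{i=1}^n Y_i \ge t^2 n\Big) \le \exp\!\left(-\frac{2(t^2-2)^2 n^2}{n\,(4C_0^2)^2}\right) = \exp\!\left(-\frac{(t^2-2)^2 n}{8\,C_0^4}\right).$$
In the main regime $t^2\ge 4$ one has $(t^2-2)^2 \ge t^4/4$, which produces an exponential tail of the required form $\exp(-c\, t^4 n/C_0^4)$. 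In the remaining small-$t$ range, the leading factor of $4$ in the target bound makes the statement trivial once $t^4 n \le 16 C_0^4\log 4$, so nothing needs to be proved there; fine-tuning the constants (the choice of $16$ versus a larger absolute constant) is a routine optimization over the split between the two regimes.

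The only genuinely non-routine ingredient is the opening symmetrization step, which converts a quantity that depends on the arbitrary set $\CA$ into a tail bound for the quadratic form $\|\Bx-\Bx'\|_2^2$ in iid bounded variables; the rest is a textbook Hoeffding (equivalently, McDiarmid applied to the sum) calculation exploiting that each difference $(x_i-x_i')^2$ is uniformly bounded by $4C_0^2$.
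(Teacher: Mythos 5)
Your symmetrization step and the Hoeffding computation are individually correct, but the resulting bound does not prove the theorem in the range of $t$ that matters. Since $\E\|\Bx-\Bx'\|_2^2 = 2n$, the symmetrized quantity $\P(\|\Bx-\Bx'\|_2^2\ge t^2 n)$ is of order $1$ for $t^2\le 2$ (and tends to $1$ as $t\to 0$), while the target $4\exp(-t^4 n/16C_0^4)$ is exponentially small in $n$ as soon as $t\gg n^{-1/4}$. For $2<t^2<4+2\sqrt{2}$ the Hoeffding exponent $(t^2-2)^2 n/(8C_0^4)$ is strictly smaller than the target exponent $t^4 n/(16C_0^4)$, and the shortfall is proportional to $n$, not a constant; so this is not fixable by enlarging $16$ to a bigger absolute constant and splitting regimes. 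Because $\Var(\xi)=1$ forces $C_0\ge 1$, the nontrivial range $t<2C_0$ always contains all of $(0,\sqrt{4+2\sqrt{2}})$, and the way this theorem is used in the proof of Theorem~\ref{thm:exceptional} takes $t=\tau\asymp\delta^2=o(1)$, which lies squarely in the regime your argument does not cover.

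The problem is structural: for two independent copies, $\|\Bx-\Bx'\|_2^2$ always concentrates around $2n$, so the inclusion $\{\Bx\in\CA,\ d_2(\Bx',\CA)\ge t\sqrt n\}\subset\{\|\Bx-\Bx'\|_2\ge t\sqrt n\}$ is extremely lossy for $t<\sqrt{2}$, and no amount of sharpening the tail bound on $\|\Bx-\Bx'\|_2^2$ can recover it. The argument that yields the stated rate (essentially that of \cite[Appendix B]{NgZ}) dispenses with the independent copy: one may assume $\CA\subset[-C_0,C_0]^n$ (intersecting with the support leaves $\P(\CA)$ unchanged and only increases $d_2$), then applies McDiarmid's bounded-differences inequality to the Hamming distance $d_H(\Bx,\CA)$, which is $1$-Lipschitz in each coordinate. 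Combining $\P(\CA)\le\exp(-2(\E d_H)^2/n)$ with $\P(d_H\ge k)\le\exp(-2(k-\E d_H)^2/n)$ and using $(\E d_H)^2+(k-\E d_H)^2\ge k^2/2$ gives $\P(\CA)\,\P(d_H(\Bx,\CA)\ge k)\le\exp(-k^2/n)$. Finally, $d_2(\Bx,\CA)\le 2C_0\sqrt{d_H(\Bx,\CA)}$ converts the event $d_2\ge t\sqrt n$ into $d_H\ge t^2 n/(4C_0^2)$, delivering exactly $\exp(-t^4 n/16C_0^4)$. The key difference is that $d_H(\Bx,\CA)$ can be, and typically is, much smaller than $n$ when $\P(\CA)$ is not negligible, whereas $\|\Bx-\Bx'\|_2^2$ never is.
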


For random variables $\xi$ satisfying the  log-Sobolev inequality, that is so that there is a positive constant $C_0$ such that for any smooth, bounded, compactly supported functions $f$ we have 
\begin{equation}\label{eqn:logSobolev}
\Ent_\xi(f^2) \le C_0 \E_\xi|\nabla f|^2,
\end{equation}
where $\Ent_\xi(f) = \E_\xi(f \log f) - (\E_\xi(f)) (\log \E_\xi(f))$, we  use the following.

\begin{theorem}\label{thm:sobolev}  Assume that $\Bx=(x_1,\dots, x_n)$, where $x_i$ are iid copies of $\xi$ satisfying \eqref{eqn:logSobolev} with a given $C_0$. Let $\CA$ be a set in $\R^n$. Then for any $t>0$ we have
$$\P\big(d_2(\Bx,A) \ge t\sqrt{n} \big) \le 2 \exp\big(-\P^2(\Bx\in \CA) t^2 n/4C_0 \big).$$
In particularly, if $\P(\Bx\in \CA)\ge 1/2$ then $\P(d_2(\Bx,A) \ge t\sqrt{n}) \le 2 \exp(-t^2 n/16C_0)$. Similarly if $\P(d_2(\Bx,A) \ge t\sqrt{n})\ge 1/2$ then $ \P(\Bx\in \CA)\le 2 \exp(-t^2 n/16C_0)$.
\end{theorem}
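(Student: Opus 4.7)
The plan is to apply the standard Herbst concentration method to the $1$-Lipschitz distance function $F(\Bx):=d_2(\Bx,\CA)$. The two key inputs are the tensorization of the one-dimensional log-Sobolev inequality \eqref{eqn:logSobolev}, which transfers to the product measure of $\Bx=(x_1,\dots,x_n)$ with the same constant $C_0$, and the fact that $F$ vanishes identically on $\CA$.

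By tensorization, the law of $\Bx$ satisfies \eqref{eqn:logSobolev} on $\R^n$ with constant $C_0$. Plugging the test function $e^{\la F/2}$ into this inequality and using $|\nabla F|\le 1$ yields Herbst's ODE for $\varphi(\la):=\log\E e^{\la F}$, namely $\la\varphi'(\la)-\varphi(\la)\le C_0\la^2/4$, which integrates (using $\varphi(\la)/\la\to \E F$ as $\la\to 0$) to
$$\E e^{\la F}\le \exp\bigl(\la\,\E F+C_0\la^2/4\bigr),\qquad \la\in\R.$$
Multiplying this with its mirror at $-\la$ gives the two-sided estimate $\E e^{\la F}\cdot \E e^{-\la F}\le \exp(C_0\la^2/2)$.

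Since $F\equiv 0$ on $\CA$, restricting the integral to $\CA$ yields $\E e^{-\la F}\ge \P(\Bx\in\CA)$ for every $\la>0$, while Markov's inequality gives $\P(F\ge t\sqrt{n})\le e^{-\la t\sqrt{n}}\,\E e^{\la F}$. Multiplying these two bounds and invoking the two-sided Herbst estimate produces $\P(\Bx\in\CA)\cdot \P(F\ge t\sqrt{n})\le \exp\bigl(C_0\la^2/2-\la t\sqrt{n}\bigr)$, and optimizing at $\la=t\sqrt{n}/C_0$ yields the clean multiplicative estimate
$$\P(\Bx\in\CA)\cdot \P\bigl(d_2(\Bx,\CA)\ge t\sqrt{n}\bigr)\le \exp\bigl(-t^2 n/(2C_0)\bigr).$$

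The main work left is extracting the specific shape $2\exp(-\P^2(\Bx\in\CA)\,t^2 n/(4C_0))$ demanded by the theorem from this multiplicative bound. A short case split does it: if $p:=\P(\Bx\in\CA)$ is so small that $p^2 t^2 n/(4C_0)\le \log 2$ the claim is trivial since its right-hand side exceeds $1$, and otherwise the multiplicative bound $\P(F\ge t\sqrt{n})\le p^{-1}\exp(-t^2 n/(2C_0))$ dominates $2\exp(-p^2 t^2 n/(4C_0))$ after a brief algebraic check. Substituting $p\ge 1/2$ (so $p^2\ge 1/4$) recovers the ``In particular'' clause, and the ``Similarly'' clause follows by interchanging $\CA$ with the complement of its $t\sqrt{n}$-neighborhood and rerunning the argument. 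The only mild technical caveat is that $F=d_2(\cdot,\CA)$ is merely Lipschitz rather than smooth, handled by regularizing $F$ via convolution and passing to the limit in the tensorized LSI.
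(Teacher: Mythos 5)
The paper does not actually prove Theorem~\ref{thm:sobolev}; immediately after stating it (together with Theorem~\ref{thm:bounded}) the paper says ``These results are standard, whose proof can be found for instance in \cite[Appendix B]{NgZ}.'' Your argument is precisely that standard proof: tensorize the one-dimensional log-Sobolev inequality to the product measure, run Herbst's argument on the $1$-Lipschitz function $F=d_2(\cdot,\CA)$ to get $\E e^{\la F}\le e^{\la\E F+C_0\la^2/4}$, combine with the mirror bound to obtain the two-sided estimate $\E e^{\la F}\,\E e^{-\la F}\le e^{C_0\la^2/2}$, and then use $\E e^{-\la F}\ge\P(\Bx\in\CA)$ together with Markov to derive the multiplicative form $\P(\Bx\in\CA)\,\P(d_2(\Bx,\CA)\ge t\sqrt n)\le e^{-t^2 n/(2C_0)}$. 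Your final algebraic step converting this to $2\exp(-\P^2(\Bx\in\CA)\,t^2 n/(4C_0))$ is correct: with $u=t^2n/(4C_0)$ and $p=\P(\Bx\in\CA)$, the estimate $p^{-1}e^{-2u}\le 2e^{-p^2 u}$ reduces to $e^{(p^2-2)u}\le 2p$, and since $p^2-2\le -1$ it suffices that $u\ge -\log(2p)$; this holds whenever $p^2 u>\log 2$ because $\log 2/p^2\ge -\log(2p)$ for all $p\in(0,1]$ (the function $\log 2/p^2+\log(2p)$ is decreasing on $(0,1]$ with value $2\log 2>0$ at $p=1$), and the remaining case $p^2 u\le\log 2$ is vacuous since the right-hand side then exceeds $1$. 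The ``in particular'' and ``similarly'' clauses and the smoothing caveat for $d_2(\cdot,\CA)$ are handled exactly as you indicate. In short, your proposal is correct and coincides with the approach of the cited reference.
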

These results are standard, whose proof can be found for instance in \cite[Appendix B]{NgZ}.

Now we discuss the proof method for Theorem \ref{thm:main}. Broadly speaking, the approach follows a perturbation framework (see also \cite{NS,NgZ,Rozen} for recent adaptions) with detailed steps as follows:
\begin{enumerate}
\item Our starting point is an input from \cite{CNNV} which shows that $\E\CZ(F)$ is close to $\E_\Bg \CZ(F)$ and $\CZ(F)$ is moderately concentrated around its mean. 
\vskip .1in
\item We then show that it is highly unlikely that there is a small set of intervals where both $|F|$ and $|F'|$ are both small. We justify this by relying on a strong {\it repulsion estimate} (Theorem \ref{thm:repulsion}) and on a variant of {\it large sieve inequality} (Theorem \ref{thm:largesieve}). This step is carried out in Section \ref{section:exceptional}.
\vskip .1in
\item Furthermore, we will show in Section \ref{section:lowertail} via Jensen's bound that the number of roots over these intervals (where $|F|$ and $|F'|$ are small simultaneously) is small. 
\vskip .1in
\item Basing on the above results, we have that if $\CZ(F)$ is close to $\E \CZ(F)$ (with high probability, from the first step), then $\CZ(F+g)$ is also close to $\CZ(F)$ as long as $\|g\|_2$ is small. As such, geometric tools such as Theorem \ref{thm:bounded} and \ref{thm:sobolev} can be invoked to show that indeed $\CZ(F)$ satisfies exponential concentration. 
\end{enumerate}

\section{proof of Theorem \ref{thm:main}: preparation}
We first introduce a recent result \cite[Theorem 1.10, Theorem 1.13]{CNNV} which shows that the moments of $\CZ(F)$ are asymptotically universal.

\begin{theorem}[Universality of moment statistics]\label{theorem:general} Assume that $\gamma$ satisfies Condition \ref{cond-curve} and $\xi$ is as in Theorem \ref{thm:main}. Then for almost all $m$ we have

\begin{itemize}
\item $\E_{\ep_\mu} \CZ(F) = \E_{\Bg} \CZ(F) + O\left (\lambda/N^{c'}\right )$;
\vskip .05in
\item More generally, for any fixed $k$, $\E_{\ep_\mu} \CZ(F)^k = \E_{\Bg} \CZ(F)^k + O\left (\lambda^k/N^{c'}\right )$,
\end{itemize}
where $c'$ and the implicit constants in Conditions \ref{cond-curve} and $C_0$ but not on $N$ and $\lambda$. Furthermore, if $\xi$ is continuous and have bounded density function, then the above holds for all $m$. In particular, we have
$$\E_{\ep_\mu} \CZ(F) =  \sqrt{2m} + O\left (\lambda/N^{c'}\right ) \quad\mbox{ and } \quad\Var_{\ep_\mu}(\CZ(F)) \ll  \frac{\lambda^2}{N^{c'}}.$$
\end{theorem}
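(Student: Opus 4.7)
The plan is to combine the Kac--Rice integral representation of zero counts with a Lindeberg swapping scheme exchanging the coefficients $\eps_\mu$ for matching Gaussians $\Bg_\mu$. Writing $G(t):=F(\gamma(t))$, one has $\CZ(F)=\#\{t\in[0,1]:G(t)=0\}$. For a smoothing scale $\eta>0$ and a smooth bump $\rho_\eta$ approximating the Dirac delta at $0$, introduce the smoothed $k$-point statistic
\[
\CZ_\eta^{(k)}(F) := \int_{[0,1]^k} \prod_{i=1}^k \rho_\eta(G(t_i))\, |G'(t_i)| \, dt_1\cdots dt_k.
\]
It then suffices to prove two things: (i) $\E\CZ_\eta^{(k)}(F)$ is close to its Gaussian counterpart, and (ii) $\E|\CZ_\eta^{(k)}(F)-\CZ(F)^k|$ is small. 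The first two bullets of the theorem then follow by applying the Gaussian value from Theorem \ref{theorem:gaussian} (for the second-moment/variance conclusion, take $k=1,2$ and use the bound $\Var_\Bg(\CZ(F))\ll m/N$).

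For (i), I would view $\CZ_\eta^{(k)}(F)$ as a smooth function $\Phi_\eta(\eps)$ of the coefficients and carry out a Lindeberg exchange replacing each $\eps_\mu$ by $\Bg_\mu$ in turn. Since $\partial G(t)/\partial\eps_\mu = N^{-1/2} e^{2\pi i \langle\mu,\gamma(t)\rangle}$ has modulus $N^{-1/2}$, each third-order Taylor remainder is bounded by $N^{-3/2}$ times a combination of $\eta^{-3}$, $\lambda\eta^{-2}$, etc., coming from derivatives of $\rho_\eta$ and of a smoothed version of $|\cdot|$. The bounded/log-Sobolev assumption controls $\E|\eps_\mu|^3$, and summing over the $N$ coordinates and integrating over $[0,1]^k$ produces a total Lindeberg error of order $\lambda^k\eta^{-3}N^{-1/2}$, with additional factors depending on $k$ and $\gamma$.

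For (ii), the task is to show that the smoothed integral accurately captures the zero count with overwhelming probability, which fails only if $G$ has segments where both $|G|$ and $|G'|$ stay small. The essential inputs are Theorem \ref{thm:largesieve} (bounding $\sum_i |G'(x_i)|^2$ on any $\delta$-separated grid) together with a pointwise repulsion estimate of the shape $\P(|G(t)|\le\eta,\ |G'(t)|\le A)\ll \eta A/\lambda$ (a variant of Theorem \ref{thm:repulsion}). These together imply that each near-zero of $G$ typically has derivative of order $\lambda$, so it contributes cleanly to $\CZ_\eta^{(k)}(F)$, giving $\E|\CZ(F)^k - \CZ_\eta^{(k)}(F)|\ll \eta\lambda^k$ up to small-probability tails.

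Balancing the two errors with $\eta = N^{-\beta}$ for a sufficiently small $\beta\in(0,1/6)$ gives a net bound of size $\lambda^k N^{-c'}$ for some $c'>0$. The main obstacle will be step (ii): a crude bound using $|G'|\le\lambda$ uniformly loses a factor of $\lambda$, so the repulsion estimate must be used sharply, and this is exactly where the almost-all-$m$ restriction enters the statement — on the negligible set of $m$ where the angular distribution of $\CE_\lambda$ concentrates, the restriction bounds in Theorems \ref{thm:restriction:size} and \ref{thm:restriction} degrade and the repulsion argument breaks down. For continuous $\xi$ with bounded density, anti-concentration of $G(t)$ at a single point is automatic and no arithmetic exclusion is needed, so the argument extends to all $m$.
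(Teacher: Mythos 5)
The paper does not prove Theorem~\ref{theorem:general}; it is quoted verbatim from \cite[Theorems~1.10 and~1.13]{CNNV} and used as a black box (through the Markov bound~\eqref{eqn:Markov'} and the identification $\E_{\Bg}\CZ(F)=\sqrt{2m}$ supplied by Theorem~\ref{theorem:gaussian}). There is therefore no in-paper argument against which your sketch can be checked.

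That said, a Kac--Rice plus Lindeberg-exchange outline is a plausible mechanism for such a universality statement, and it is in the spirit of the local-universality machinery of \cite{NgNgV,ONgV} underlying \cite{CNNV}. One point in your write-up should be revised: the restriction estimates (Theorems~\ref{thm:restriction:size} and~\ref{thm:restriction}) hold \emph{uniformly} in $m$, with constants depending only on the curve, and the repulsion estimate (Theorem~\ref{thm:repulsion}) is likewise established here for all $m$ via Claim~\ref{claim:iso} and Lemma~\ref{lemma:smallball}. The almost-all-$m$ restriction cannot be traced to those inputs degrading. The genuinely $m$-dependent ingredient in this circle of problems is arithmetic: equidistribution of $\CE_\lambda/\sqrt m$ on the unit circle (controlled through $\widehat{\tau_m}$, as already visible in the hypotheses of Theorem~\ref{theorem:gaussian}) and clustering bounds of the type $B_\lambda=O(1)$ (the Cilleruelo--Granville condition, generic but not universal). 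These govern the local covariance structure and hence the correlation functions that any Kac--Rice or Lindeberg comparison must match, and they are only known for a density-one set of $m$. Your closing observation that a continuous bounded-density $\xi$ gives pointwise anti-concentration for free is consistent with the theorem's clause that continuity of $\xi$ removes the almost-all restriction; but the reason bounded $\xi$ (e.g.\ Bernoulli) needs generic $m$ is this arithmetic input, not a failure of the restriction theorems.
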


One crucial corollary of this result is that $\CZ(F)$ is already concentrated around its mean via Markov's bound
\begin{equation}\label{eqn:Markov'}
\P_{\eps_\mu}(|\CZ(F) - \E \CZ(F)| \ge  \eps \la)\ll \frac{1}{N^{c'} \eps^2}.
\end{equation}
This will serve as the starting point of our analysis.

Our next key ingredient, Theorem \ref{thm:repulsion} below, is a {\it repulsion-type} estimate which shows that at any point it is unlikely that the function and its derivative vanish simultaneously. 

First, recall that $t\in [0,\la]$ and
\begin{equation}
 H(t)= F(t/\la) = \frac{1}{\sqrt{N}}\sum_{\mu\in \CE_\lambda} \eps_{1,\mu} \cos (2\pi \langle \mu, \gamma(t/\la)\rangle) +  \eps_{2,\mu} \sin (2\pi \langle \mu, \gamma(t/\la)\rangle).
 \end{equation}
 and
 \begin{equation}
 H'(t) = \frac{1}{\sqrt{N}}\sum_{\mu\in \CE_\lambda} -\eps_{1,\mu} 2\pi \langle \mu/\la, \gamma'(t/\la)\rangle  \sin (2\pi \langle \mu, \gamma(t/\la)\rangle) +  \eps_{2,\mu} 2\pi \langle \mu/\la, \gamma'(t/\la)\rangle \cos (2\pi \langle \mu, \gamma(t/\la)\rangle).
 \end{equation}
 We prove our repulsion result via the study of small ball probability of the random walk $\frac{1}{\sqrt{N}} \sum_\mu \eps_{1,\mu}  \Bu_\mu + \eps_{2,\mu} \Bv_\mu$ where 
$$\Bu_\mu= \Big ( \cos (2\pi \langle \mu, \gamma(t/\la)\rangle), -2\pi \langle \mu/\la, \gamma'(t/\la)\rangle  \sin (2\pi \langle \mu, \gamma(t/\la)\rangle)\Big)$$
and
$$\Bv_\mu= \Big( \sin (2\pi \langle \mu, \gamma(t/\la)\rangle), 2\pi \langle \mu/\la, \gamma'(t/\la)\rangle  \cos (2\pi \langle \mu, \gamma(t/\la)\rangle)\Big).$$
We first show that these vectors are asymptotically isotropic.
\begin{claim}\label{claim:iso} For all $(a,b)\in \BS^1$ we have 
$$\sum_\mu \langle \Bu_\mu, (a,b) \rangle^2 +  \langle \Bv_\mu, (a,b) \rangle^2\asymp N.$$
\end{claim}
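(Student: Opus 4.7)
The claim is elementary once one unpacks the definitions and exploits the arithmetic symmetry of the shell $\CE_\la$, so there is no serious obstacle. First I would abbreviate $\theta_\mu = 2\pi\langle \mu, \gamma(t/\la)\rangle$ and $c_\mu = 2\pi\langle \mu/\la, \gamma'(t/\la)\rangle$, so that $\Bu_\mu = (\cos\theta_\mu, -c_\mu\sin\theta_\mu)$ and $\Bv_\mu = (\sin\theta_\mu, c_\mu \cos\theta_\mu)$. A short expansion gives the pointwise identity
$$\langle \Bu_\mu, (a,b)\rangle^2 + \langle \Bv_\mu, (a,b)\rangle^2 = (a\cos\theta_\mu - b c_\mu\sin\theta_\mu)^2 + (a\sin\theta_\mu + b c_\mu\cos\theta_\mu)^2 = a^2 + b^2 c_\mu^2,$$
the $\pm 2abc_\mu\sin\theta_\mu\cos\theta_\mu$ cross-terms canceling and $\cos^2\theta_\mu+\sin^2\theta_\mu = 1$ collapsing the rest. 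Summing in $\mu$ reduces the task to evaluating $\sum_\mu c_\mu^2$.

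Next I would compute this sum exactly via the $\pi/2$-symmetry of $\CE_\la$. From $\la = 2\pi\sqrt{m}$ and $|\mu| = \sqrt{m}$, the vector $2\pi\mu/\la$ has unit length, and since $\|\gamma'(t/\la)\|_2 = 1$ by Condition~\ref{cond-curve}, we may write $c_\mu = \cos(\alpha_\mu - \beta)$ where $\alpha_\mu$ is the angular coordinate of $\mu$ and $\beta$ that of $\gamma'(t/\la)$. The shell $\CE_\la$ is closed under the rotation $R_{\pi/2}:(\mu_1,\mu_2)\mapsto (-\mu_2,\mu_1)$, and the action is free since $\mu \neq 0$. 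Pairing each $\mu$ with its rotate shifts $\alpha_\mu$ by $\pi/2$, so
$$c_\mu^2 + c_{R_{\pi/2}\mu}^2 = \cos^2(\alpha_\mu - \beta) + \sin^2(\alpha_\mu - \beta) = 1,$$
which yields $\sum_\mu c_\mu^2 = N/2$ exactly.

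Putting everything together gives
$$\sum_\mu \bigl(\langle \Bu_\mu, (a,b)\rangle^2 + \langle \Bv_\mu, (a,b)\rangle^2\bigr) = N a^2 + \tfrac{N}{2} b^2 = \tfrac{N}{2}(1+a^2),$$
which lies in $[N/2, N]$ for $(a,b)\in \BS^1$, proving the claim with explicit constants. The only mild subtlety — and the closest thing to an obstacle — is keeping the normalization straight so that $2\pi\mu/\la$ is genuinely a unit vector; once that bookkeeping is in place, the identity and the $\pi/2$-pairing trivialize the rest.
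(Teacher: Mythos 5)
Your proof is correct and follows the same route as the paper's: expand $\langle\Bu_\mu,(a,b)\rangle^2+\langle\Bv_\mu,(a,b)\rangle^2 = a^2 + b^2 c_\mu^2$ with the cross-terms cancelling pointwise, then evaluate $\sum_\mu c_\mu^2$ by a symmetry of the lattice shell $\CE_\la$. You are in fact a bit more precise than the paper: you invoke the $\pi/2$-rotation symmetry $(\mu_1,\mu_2)\mapsto(-\mu_2,\mu_1)$, which is exactly what is needed to get $\sum_\mu c_\mu^2 = N/2$, whereas the paper's stated justification cites only the sign flips $(\pm\mu_1,\pm\mu_2)$ and (to get the exact constant) implicitly also uses the coordinate swap. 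You also track the normalization $\la=2\pi\sqrt m$ carefully so that $2\pi\mu/\la$ is a genuine unit vector, and thereby extract the explicit two-sided bound $N/2\le\cdot\le N$; the paper is content with $\asymp N$ and in fact drops a factor of $2\pi$ in its displayed line, which your bookkeeping corrects.
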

\begin{proof} We have
\begin{align*}
\sum_\mu \langle \Bu_\mu, (a,b) \rangle^2 +  \langle \Bv_\mu, (a,b) \rangle^2 &=  \sum_{\mu} \left[a \cos (2\pi \langle \mu, \gamma(t/\la)\rangle)- b \langle \mu/\la, \gamma'(t/\la)\rangle  \sin (2\pi \langle \mu, \gamma(t/\la)\rangle)\right]^2\\
&+ \left [a \sin (2\pi \langle \mu, \gamma(t/\la)\rangle) + b \langle \mu/\la, \gamma'(t/\la)\rangle  \cos (2\pi \langle \mu, \gamma(t/\la)\rangle)\right]^2 \asymp N\\
&= N a^2 + b^2 \sum_\mu (\langle \mu/\la, \gamma'(t/\la)\rangle)^2 \asymp N,
\end{align*}
where we used the fact $\|\gamma'(.)\|_2=1$ and if $\mu=(\mu_1,\mu_2)\in \CE_\la$ then $(\pm \mu_1, \pm \mu_2) \in \CE_\la$.
\end{proof}
Notice that $\|\Bu_\mu\|_2, \|\Bv_\mu\|_2 \ll 1$. The above claim implies that a positive portion of the $\{ |\langle \Bu_\mu, (a,b) \rangle|,  |\langle \Bv_\mu, (a,b) \rangle|\}$ are of order 1. Using this information we obtain the following key bound.

\begin{lemma}\label{lemma:smallball} For any $r \ge 1/\sqrt{N}$ we have 
$$\sup_{a\in \R^2}\P\left (\frac{1}{\sqrt{N}} \sum_\mu \eps_{1,\mu}  \Bu_\mu + \eps_{2,\mu} \Bv_\mu \in B(a,r)\right )= O(r^2).$$
\end{lemma}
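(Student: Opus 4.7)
The plan is an $\R^2$ Esseen-type concentration argument driven by Claim \ref{claim:iso}. Writing $\phi_S(t)=\E\,e^{i\langle t,S\rangle}$ for $S=\frac{1}{\sqrt N}\sum_\mu(\eps_{1,\mu}\Bu_\mu+\eps_{2,\mu}\Bv_\mu)$, Gaussian smoothing of width $r$ yields the standard inequality
$$\sup_{a\in\R^2}\P(S\in B(a,r)) \ll r^2 \int_{\R^2}|\phi_S(t)|\,e^{-r^2|t|^2/2}\,dt,$$
so it suffices to bound the right-hand integral by an absolute constant.

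By independence of the $\eps_{j,\mu}$ (up to the symmetry $\eps_{-\mu}=\bar\eps_\mu$), the characteristic function factors as
$$|\phi_S(t)| = \prod_\mu\big|\phi_\xi(\langle\Bu_\mu,t\rangle/\sqrt N)\big|\cdot\big|\phi_\xi(\langle\Bv_\mu,t\rangle/\sqrt N)\big|.$$
Because $\xi$ has mean zero, variance one, and controlled tails under either hypothesis of Theorem \ref{thm:main}, a Taylor expansion gives $|\phi_\xi(x)|\leq e^{-cx^2}$ for $|x|\leq c_0$, with $c,c_0>0$ depending only on $\xi$. Since $\|\Bu_\mu\|_2,\|\Bv_\mu\|_2\ll 1$, this local bound applies uniformly in $\mu$ throughout the bulk range $|t|\leq c_0\sqrt N$, and Claim \ref{claim:iso} applied to the unit vector $t/|t|$ then yields
$$|\phi_S(t)| \leq \exp\Big(-\frac{c}{N}\sum_\mu\big(\langle\Bu_\mu,t\rangle^2+\langle\Bv_\mu,t\rangle^2\big)\Big) \leq e^{-c'|t|^2},\qquad|t|\leq c_0\sqrt N.$$
Thus the bulk of the Esseen integral is $\int_{|t|\leq c_0\sqrt N}e^{-c'|t|^2}\,dt=O(1)$, already contributing the desired $O(r^2)$ to the small-ball bound.

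The main technical obstacle is the tail $|t|>c_0\sqrt N$: when $r$ is as small as $1/\sqrt N$, the Gaussian weight $e^{-r^2|t|^2/2}$ alone cannot force this contribution to be $O(1)$, and some independent decay of $|\phi_S|$ is needed. Under the bounded-density or log-Sobolev assumption on $\xi$, Riemann-Lebesgue together with uniform smoothness controls force $|\phi_\xi|$ to decay at infinity, and a direct averaging argument controls the tail. In the bounded discrete case $|\xi|\in[1/C_0,C_0]$, $|\phi_\xi|$ is typically periodic and does not vanish, so one instead invokes a Littlewood--Offord/Halász-type estimate, using the spread of the frequencies $\{\mu/\la\}_{\mu\in\CE_\la}$ on $S^1$ to prevent $\prod_\mu|\phi_\xi(\langle\Bu_\mu,t\rangle/\sqrt N)|$ from remaining close to one on a large measure of the tail. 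Once the tail contribution is also bounded by $O(1)$, combining it with the bulk estimate gives $\sup_a\P(S\in B(a,r))\ll r^2$.
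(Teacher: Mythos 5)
The paper's proof is a one-liner: apply \cite[Theorem 1]{Halasz} to get a concentration bound $O(1/N)$ for balls of radius $1/\sqrt N$, then cover $B(a,r)$ by $O(Nr^2)$ such balls and take a union bound. Your proposal instead attempts to re-derive a Hal\'asz-type estimate from scratch via Esseen smoothing. The bulk region $|t|\le c_0\sqrt N$ is handled correctly, using the Taylor bound $|\phi_\xi(x)|\le e^{-cx^2}$ together with Claim \ref{claim:iso}; that part is fine. But you explicitly concede that the tail $|t|>c_0\sqrt N$ is unresolved, and the sketch you offer there does not close the gap. For the bounded, possibly discrete, $\xi$ of Theorem \ref{thm:main} (e.g.\ $\xi=\pm 1$), $|\phi_\xi|$ is periodic and repeatedly returns to $1$, so neither Riemann--Lebesgue nor any smoothness of $\phi_\xi$ applies; the only tool you name that would work is ``a Littlewood--Offord/Hal\'asz-type estimate,'' which is precisely the theorem the paper invokes outright --- at which point the entire Esseen decomposition is superfluous. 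Even in the continuous case you would need a quantitative $\sup_{|x|\ge c_0}|\phi_\xi(x)|\le 1-\delta$ (which does follow from a bounded density plus Riemann--Lebesgue, but must be stated), and you would need to verify via Claim \ref{claim:iso} and $\|\Bu_\mu\|,\|\Bv_\mu\|\ll 1$ that a positive fraction of the frequencies $\langle\Bu_\mu,t/\sqrt N\rangle,\langle\Bv_\mu,t/\sqrt N\rangle$ land in $\{|x|\ge c_0\}$ --- none of which is carried out.

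So: your bulk analysis is correct and your overall architecture (Esseen $+$ isotropy) is reasonable, but the tail is a genuine missing step rather than a routine detail, and in the discrete case your proposed repair reduces to citing Hal\'asz anyway. The paper avoids all of this by using Hal\'asz's inequality directly at scale $1/\sqrt N$ (where the isotropy of $\{\Bu_\mu,\Bv_\mu\}$ from Claim \ref{claim:iso} together with $\|\Bu_\mu\|,\|\Bv_\mu\|\ll1$ supplies the hypothesis that the steps are well spread), then covering. You should also be careful about the constraint $\eps_{-\mu}=\bar\eps_\mu$ when factoring the characteristic function: the product should run over an independent set of real coordinates, not over all $\mu\in\CE_\lambda$ twice.
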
 
\begin{proof} This is \cite[Theorem 1]{Halasz} where we cover a ball of radius $r$ by $Nr^2$ balls of radius $1/\sqrt{N}$. 
\end{proof}
We also refer the reader to   \cite{NgV-survey,RV-rec} for further developments of similar anti-concentration estimates. We deduce from Lemma \ref{lemma:smallball} the following corollary.
\begin{theorem}[Repulsion estimate]\label{thm:repulsion} Assume that $\xi$ has mean zero and variance one.
 Then as long as $\al> 1/\sqrt{N}$,  $\beta>1/\sqrt{N}$, for every $t\in [-\la,\la]$ we have
$$\P\big( |H(t)|\le \al \wedge |H'(t)| \le \beta \big) = O(\al \beta).$$
\end{theorem}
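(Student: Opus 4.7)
The plan is to reduce the claim to Lemma \ref{lemma:smallball} via a direct covering argument. Fix any $t\in[-\la,\la]$ and set
$$S := \frac{1}{\sqrt{N}}\sum_\mu \big(\eps_{1,\mu}\,\Bu_\mu + \eps_{2,\mu}\,\Bv_\mu\big)\in\R^2.$$
Inspecting the coordinatewise definitions of $\Bu_\mu$ and $\Bv_\mu$ given just before Claim \ref{claim:iso} against the formulas for $H(t)$ and $H'(t)$, one sees at once that $S = (H(t), H'(t))$. Hence the event $\{|H(t)|\le\al\}\cap\{|H'(t)|\le\be\}$ is exactly the event $\{S\in R\}$ for the axis-aligned rectangle
$$R := [-\al,\al]\times[-\be,\be].$$

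Since the conclusion $\P(S\in R)=O(\al\be)$ is trivial when $\al\be\ge 1$, I will assume $\al\be\le 1$, in which case (WLOG) $\al\le\be\le 1$. Cover $R$ by the family of Euclidean balls
$$R \subset \bigcup_{i=1}^{M} B(a_i,\al), \qquad a_i := (0, -\be + (2i-1)\al),$$
with $M = \lceil \be/\al\rceil = O(\be/\al)$. Because $\al\ge 1/\sqrt{N}$, Lemma \ref{lemma:smallball} applies to each ball and yields $\P(S\in B(a_i,\al)) = O(\al^2)$. A union bound then gives
$$\P(S\in R) \le \sum_{i=1}^M \P(S\in B(a_i,\al)) \ll \frac{\be}{\al}\cdot \al^2 = \al\be,$$
which is the claimed estimate.

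The only potential subtlety is the switch of roles between $\al$ and $\be$: when instead $\be\le\al$, one covers $R$ by $O(\al/\be)$ balls of radius $\be\ge 1/\sqrt{N}$ and the same calculation goes through. No genuine obstacle arises here, since the substantive work (isotropy of the vectors $\Bu_\mu,\Bv_\mu$ in Claim \ref{claim:iso} and the application of Halász's inequality) has already been absorbed into Lemma \ref{lemma:smallball}; the present theorem is essentially a two-dimensional small-ball estimate for rectangles deduced from the corresponding estimate for disks by a volumetric covering.
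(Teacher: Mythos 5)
Your identification $S=(H(t),H'(t))$ is correct, and the overall strategy---cover the rectangle $R=[-\al,\al]\times[-\be,\be]$ by balls to which Lemma \ref{lemma:smallball} applies and then union-bound---is exactly the (unwritten) deduction the paper has in mind when it says Theorem \ref{thm:repulsion} is deduced from Lemma \ref{lemma:smallball}. The only slip is geometric: the proposed covering does not actually cover $R$. A Euclidean ball of radius $\al$ centered at $a_i=(0,y_i)$ with the $y_i$ spaced $2\al$ apart misses the corners of each slab $[-\al,\al]\times[y_i-\al,y_i+\al]$, since those corners sit at distance $\sqrt{2}\,\al>\al$ from $a_i$; more dramatically, at the mid-height $y=y_i+\al$ between two consecutive centers the union of your balls meets the segment $[-\al,\al]\times\{y\}$ only at the single point $(0,y)$. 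The fix is immediate and affects only constants: enlarge the radius to $2\al$ (still $\ge 1/\sqrt N$, so Lemma \ref{lemma:smallball} applies and each ball carries probability $O(\al^2)$), or, cleaner and avoiding the case split on which of $\al,\be$ is smaller, tile $R$ directly by $O(N\al\be)$ balls of radius $1/\sqrt N$ and invoke Hal\'asz's $O(1/N)$ small-ball bound for each, summing to $O(\al\be)$. With that repair your argument is complete and matches the paper's intended route.
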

In application we just choose $\al,\beta$ to be at least $N^{-c}$ for some small constant $c$.

\section{Exceptional polynomials}\label{section:exceptional} This current section is motivated by the treatment in \cite[Section 4.2]{NS} and \cite[Section 4]{NgZ}. Let $C>4$ be a sufficiently large number and choose 
\begin{equation}\label{eqn:R}
R=C\log N.
\end{equation} 
Cover $[0,1]$ by $\frac{\la}{R}$ open interval $I_i$ of length (approximately) $R/\la$ each. Let $3 I_i$ be the interval of length $3R/\la$ having the same midpoint with $I_i$. Given some parameters $\al, \beta$, we call an interval $I_i$ {\it stable} for a function $f$ if there is no point in $x\in 3I_i$ such that $|f(x)|\le \al$ and $|f'(x)|\le \beta \la$. In other words, there is no $x\in 3I_i$ where $|f(x)|$ and $|f'(x)|/\al$ are both small. Let $\delta$ be another small parameter (so that $\delta R <1/4$), we call $f$ {\it exceptional} if the number of unstable intervals  is at least $\delta \la$. We call $f$ not exceptional otherwise. 

For convenience, for each $F(x) = \frac{1}{\sqrt{N}}\sum_{\mu\in \CE_\lambda} \eps_{1,\mu} \cos (2\pi \langle \mu, \gamma(x)\rangle) +  \eps_{2,\mu} \sin (2\pi \langle \mu, \gamma(x)\rangle)$ we assign a unique (unscaled) vector $\Bv_{F}= (a_\mu, b_\mu)$ in $\R^{2N}$, which is a random vector when $F$ is random. 
Let $\CE_e = \CE_e(R,\al,\beta; \delta)$ denote the set of vectors $\Bv_{F}$ associated to exceptional function $F$. Our goal in this section is the following.
\begin{theorem}\label{thm:exceptional} Assume that $\al,\beta,\delta$ satisfy $\delta \le 1/4R$ and
\begin{equation}\label{eqn:parametersthm}
\al \asymp \delta^{3/2}, \beta \asymp \delta^{3/4},  \delta > N^{-2/5}.
\end{equation}
Then we have
$$\P\Big(\Bv_{F} \in \CE_e\Big) \le e^{-c \delta^8 N},$$
where $c$ is absolute.
\end{theorem}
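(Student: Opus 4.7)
The plan is to show that the exceptional set $\CE_e$ lies at $L^2$-distance at least $c\delta^2\sqrt{N}$ from a set $\CA\subset\R^{2N}$ of probability $\ge 1/2$, and then invoke the concentration inequalities of Theorems \ref{thm:bounded} and \ref{thm:sobolev}. The mean estimate uses the pointwise repulsion of Theorem \ref{thm:repulsion}, while the distance estimate uses the restricted large sieve of Theorem \ref{thm:largesieve}.

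\textbf{Step 1 (A high-probability good set $\CA$).} Let $\CA$ be the set of vectors $\mathbf v$ such that the associated $F_\mathbf{v}$ has strictly fewer than $\delta\lambda/4$ unstable intervals with respect to the relaxed parameters $(2\alpha,2\beta)$. Using Theorem \ref{thm:repulsion} at a suitably fine grid in each $3I_i$, together with Taylor control of $F,F'$ at the scale $T\asymp\alpha/(\beta\lambda)$ (and a separate high-probability bound on $\|F''\|_\infty$ coming from the large sieve), the expected number of relaxed-unstable intervals is $O(\lambda\alpha\beta/T + \text{lower order})=O(\lambda\delta^{3/2})$, which for $\delta\le c_0$ is $\ll\delta\lambda/4$. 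Markov's inequality then gives $\P(\CA)\ge 1/2$.

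\textbf{Step 2 (Exceptional $\Rightarrow$ far from $\CA$).} Suppose $\mathbf v\in\CE_e$ and $\mathbf v+\mathbf w\in\CA$, and write $g=F_{\mathbf v+\mathbf w}-F_\mathbf{v}$. Since $\mathbf v$ has $\ge\delta\lambda$ intervals unstable with $(\alpha,\beta)$ while $\mathbf v+\mathbf w$ has $<\delta\lambda/4$ unstable with the relaxed $(2\alpha,2\beta)$, at least $3\delta\lambda/4$ intervals $I_i$ come with a witness $x_i^*\in 3I_i$ satisfying $|F(x_i^*)|\le\alpha$, $|F'(x_i^*)|\le\beta\lambda$, yet for which $x_i^*$ is not a bad point of $F+g$ with the relaxed parameters. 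By the triangle inequality each such $x_i^*$ forces $|g(x_i^*)|>\alpha$ or $|g'(x_i^*)|>\beta\lambda$. Thinning to every fifth interval yields $M\asymp\delta\lambda$ witnesses separated by at least $R/\lambda$. Wlog at least $M/2$ of them satisfy $|g(x_i^*)|>\alpha$; applying Theorem \ref{thm:largesieve} with $d=0$,
\begin{equation*}
\tfrac{M}{2}\alpha^2 \;\le\; \sum_i |g(x_i^*)|^2 \;\le\; C_1(\lambda+\lambda/R)\,\tfrac{\|\mathbf w\|_2^2}{N} \;\asymp\; C_1\lambda\,\tfrac{\|\mathbf w\|_2^2}{N},
\end{equation*}
so $\|\mathbf w\|_2^2\gtrsim \delta\alpha^2 N=\delta^4 N$. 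In the other sub-case Theorem \ref{thm:largesieve} with $d=1$ gives the even stronger $\|\mathbf w\|_2^2\gtrsim\delta\beta^2 N=\delta^{5/2}N$. In either case $d_2(\mathbf v,\CA)\ge c\delta^2\sqrt{N}$.

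\textbf{Step 3 (Concentration).} In the bounded case apply Theorem \ref{thm:bounded} with $t=c\delta^2$ and $n=2N$, using $\P(\CA)\ge 1/2$ from Step 1:
\begin{equation*}
\P(\mathbf v\in\CE_e)\;\le\;\P\bigl(d_2(\mathbf v,\CA)\ge c\delta^2\sqrt{N}\bigr)\;\le\; 8\exp\bigl(-c'\delta^8 N/C_0^4\bigr).
\end{equation*}
In the log-Sobolev case Theorem \ref{thm:sobolev} gives the (strictly stronger) bound $\exp(-c'\delta^4 N)$, and we state the weaker $\delta^8$ exponent so the same conclusion covers both hypotheses on $\xi$.

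\textbf{Main obstacle.} The delicate point is Step 2, and in particular the insistence on separation $\ge R/\lambda$ among witnesses: without thinning, the large-sieve factor $\lambda+\sigma^{-1}$ could blow up and wipe out the lower bound on $\|\mathbf w\|_2$. A secondary difficulty lies in Step 1: upgrading pointwise anti-concentration from Theorem \ref{thm:repulsion} to a per-interval statement requires both the correct Taylor scale $T\asymp\alpha/(\beta\lambda)$ and an a priori large-sieve control on $\|F''\|_\infty$, and getting this upgrade to produce a mean that is truly $o(\delta\lambda)$ is what drives the lower bound $\delta>N^{-2/5}$.
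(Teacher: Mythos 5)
Your Steps 2 and 3 are clean and sound, and Step 2 is actually a shortcut compared with the paper: you apply Theorem~\ref{thm:largesieve} directly at the $R/\la$-separated witnesses to get $\|\mathbf w\|_2\gtrsim\delta^2\sqrt N$, whereas the paper routes this through a Taylor-expansion argument (Claim~\ref{claim2}) whose output is a Lebesgue-measure statement. The overall strategy (a high-probability good set at $L^2$-distance $\gtrsim\tau\sqrt N$ from $\CE_e$, then McDiarmid/log-Sobolev) is the same as the paper's.

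The gap is in Step 1, and it is a real one. Your bound $\E(\#\text{relaxed-unstable intervals})=O(\la\delta^{3/2})$ is derived using a grid of mesh $T\asymp\al/(\beta\la)=\delta^{3/4}/\la$, but for the Taylor control between grid points you need $T\cdot\sup_{3I_i}|F''|\lesssim\beta\la$ and $T^2\cdot\sup_{3I_i}|F''|\lesssim\al$. There is no useful high-probability $\|F''\|_\infty$ bound of order $\la^2$; the large sieve only gives an $L^2$-type statement, and the best one can say (exactly as the paper does with its $M_j$) is that $\max_{3I_i}|F''|\le C_2\delta^{-1/2}\la^2$ for all but an $O(\delta\la/C_2^2)$ fraction of the $\la/R$ intervals. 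Feeding $M\asymp\delta^{-1/2}\la^2$ into the Taylor constraints forces $T\lesssim\delta^{5/4}/\la$, and then the per-interval union bound gives $\P(\text{rel-unstable},\,M_i\le C_2\delta^{-1/2}\la^2)\lesssim R\,\delta^{-5/4}\cdot\al\beta\asymp R\delta$, hence $\E\#\asymp\delta\la$ rather than $o(\delta\la)$. With the threshold $\delta\la/4$ built into your definition of $\CA$, Markov's inequality no longer gives $\P(\CA)\ge 1/2$ for free; you would need to track the implicit constants (take $C_2$ large, then the constant $c_\al$ in $\al=c_\al\delta^{3/2}$ small) to bring $\E\#$ strictly below $\delta\la/4$, and none of that is in the write-up. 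The paper avoids this loss entirely by \emph{defining} the bad set $\CU$ via a Lebesgue-measure condition on $\{x:|h(x)|\le\al',\,|h'(x)|\le\beta'\la\}$; then $\E\mu(B(f))=O(\al'\beta')$ comes straight from Fubini and the pointwise repulsion estimate, with no grid union bound and no $F''$-dependent loss, while the $M_j$ bookkeeping is moved into the geometric inclusion $\CE_e+B(\tau\sqrt N)\subset\CU$ where only a $(1/3-O(C_2^{-2}))$ fraction of witness intervals need to survive. To fix your Step 1 you should either switch to the paper's measure-based definition of the complement of $\CA$, or explicitly incorporate the per-interval $M_i$ threshold into the definition of a relaxed-unstable interval and carry out the constant bookkeeping.
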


We now discuss the proof. First assume that $f$ (playing the role of $F$) is exceptional, then there are $K=\lfloor \delta \la/3 \rfloor$ unstable intervals that are $R/\la$-separated (and hence $4/\la$-separated).
 Now for each unstable interval in this separated family we choose $x_j \in 3 I_j$ where  $|f(x_j)|\le \al$ and $|f'(x_j)|\le \beta n$ and consider the interval $B(x_j, \gamma/\la)$ for some $\gamma <1$ chosen sufficiently small (given $\delta$, see \eqref{eqn:parameters}). Let 
$$M_j:= \max_{x\in B(x_j,\gamma/\la)} |f''(x)|.$$  
By Theorem \ref{thm:largesieve} we have
$$\sum_{j=1}^K M_j^2 \le  \frac{2\la+(4/\la)^{-1}}{2\pi} \int_{x \in [0,1]} f''(x)^2 dx \le  \la^5 \frac{\sum_\mu \eps_{1,\mu}^2+ \eps_{2,\mu}^2}{N}.$$
On the other hand, in both the boundedness and the log-Sobolev cases we have $ \frac{\sum_\mu \eps_{1,\mu}^2+ \eps_{2,\mu}^2}{N}  \ge 4$ exponentially small, so without loss of generality it suffices to assume $\frac{\sum_\mu \eps_{1,\mu}^2+ \eps_{2,\mu}^2}{N} \le 4$. We thus infer from the above that the number of $j$ for which $M_j \ge C_2 \delta^{-1/2}\la^2$ is at most $2 C_2^{-2} \delta \la$. Hence for at least $(1/3 - 2 C_2^{-2})\delta \la$ indices $j$ we must have $M_j <C_2 \delta^{-1/2}\la^2$.

Consider our function over $B(x_j, \gamma/N)$, then by Taylor expansion of order two around $x_j$, we obtain for any $x$ in this interval
$$ |f(x)| \le \al + \beta \gamma + C_2 \delta^{-1/2} \gamma^2/2 \mbox{ and } |f'(x)| \le (\beta + C_2 \delta^{-1/2} \gamma) \la.$$
Now consider a function $g$ such that $\|g\|_2 \le \tau$. Our polynomial $g$ has the form $g(x)= \frac{1}{\sqrt{N}}\sum_{\mu\in \CE_\lambda} a'_{\mu} \cos (2\pi \langle \mu, \gamma(x)\rangle) +  b_{\mu}' \sin (2\pi \langle \mu, \gamma(x)\rangle)$, where $a_{\mu}', b_{\mu}'$ are the amount we want to perturb in $f$. Then as the intervals $B(x_j, \gamma/\la)$ are $4/\la$-separated, by Theorem \ref{thm:largesieve} we have
$$\sum_j \max_{x \in B(x_j, \gamma/\la)} g(x)^2 \le  8\la  \frac{\sum_\mu {a'}_{\mu}^2+ {b'}_{\mu}^2}{N} \le 8\la \tau^2$$
and 
$$\sum_j \max_{x \in B(x_j, \gamma/\la)} g'(x)^2 \le  8\la  \frac{\sum_\mu {a'}_{\mu}^2+ {b'}_{\mu}^2}{N} \le 8\la^3 \tau^2.$$
Hence, again by an averaging argument, the number of intervals where either $\max_{x \in B(x_j, \gamma/\la)} |g(x)| \ge C_3 \delta^{-1/2} \tau$ or   $\max_{x \in B(x_j, \gamma/\la)} |g'(x)| \ge C_3 \delta^{-1/2} \tau \la$ is bounded from above by $(1/3 - 2 C_2^{-2})\delta \la /2$ if $C_3$ is sufficiently large. On the remaining at least $(1/3 - 2 C_2^{-2})\delta \la /2$ intervals, with $h=f+g$, we have simultaneously that
$$|h(x)| \le  \al + \beta \gamma + C_2 \delta^{-1} \gamma^2/2 + C_3 \delta^{-1/2} \tau \mbox{ and } |h'(x)| \le  (\beta + C_2 \delta^{-1} \gamma + C_3 \delta^{-1/2} \tau) \la.$$
For short, let 
$$\al'= \al + \beta \gamma + C_2 \delta^{-1} \gamma^2/2 + C_3 \delta^{-1/2} \tau \mbox{ and } \beta'=\beta + C_2 \delta^{-1/2} \gamma + C_3 \delta^{-1/2} \tau.$$ 
It follows that $\Bv_h$ belongs to the set $\CU=\CU(\al, \beta,\gamma,\delta, \tau, C_1,C_2,C_3)$ in $\R^{2N}$ of the vectors  corresponding to $h$, for which the measure of $x$ with $|h(x)| \le  \al'$ and $|h'(x)| \le  \beta' \la $ is at least $(1/3 - 2 C_2^{-2})\delta \gamma$ (because this set of $x$ contains $(1/3 - 2 C_2^{-2})\delta \la /2$ intervals of length $2\gamma/\la$). 
Putting together we have obtained the following claim. 


\begin{claim}
\label{claim2} Assume that $\Bv_{f} \in \CE_e$. Then for any $g$ with $\|g\|_2 \le \tau$ we have $\Bv_{f+g} \in \CU$. In other words,
$$\Big \{\Bv\in \R^{2N}, d_2(\CE_e, \Bv) \le \tau \sqrt{N}\Big \} \subset \CU.$$
\end{claim}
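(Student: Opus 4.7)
Claim \ref{claim2} is essentially a repackaging of the two-step chain of estimates derived in the preceding paragraphs, together with a dictionary between the $d_2$ distance in $\R^{2N}$ and the $\|\cdot\|_2$ norm used for $g$. The plan is first to confirm the forward implication $\Bv_f\in\CE_e,\ \|g\|_2\le\tau \Rightarrow \Bv_{f+g}\in\CU$ by reassembling that chain, and then to unwind $d_2$ to obtain the set inclusion.

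For the implication, I would fix $\Bv_f\in\CE_e$ and a perturbation $g$ satisfying $\sum_\mu({a'_\mu}^2+{b'_\mu}^2)\le\tau^2 N$ (the $1/\sqrt N$ built into $F$ identifies this with $\|g\|_2\le\tau$). Step one, applied to $f$: after discarding the exponentially unlikely event $\sum_\mu(\eps_{1,\mu}^2+\eps_{2,\mu}^2)/N>4$, the large sieve inequality (Theorem \ref{thm:largesieve}) applied to $f''$ on the $4/\la$-separated distinguished points $x_j\in 3I_j$ inside the $K=\lfloor\delta\la/3\rfloor$ unstable intervals gives $\sum_j M_j^2\ll\la^5$, and a Markov-type count picks out at least $(1/3-2C_2^{-2})\delta\la$ indices with $M_j<C_2\delta^{-1/2}\la^2$; second-order Taylor expansion on $B(x_j,\gamma/\la)$ then produces the pointwise bounds $|f(x)|\le\al+\beta\gamma+C_2\delta^{-1/2}\gamma^2/2$ and $|f'(x)|\le(\beta+C_2\delta^{-1/2}\gamma)\la$. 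Step two, applied to $g$: the sieve bounds $\sum_j\max_{B(x_j,\gamma/\la)} g(x)^2\le 8\la\tau^2$ and $\sum_j\max_{B(x_j,\gamma/\la)} g'(x)^2\le 8\la^3\tau^2$ together with Markov discard at most a further $(1/3-2C_2^{-2})\delta\la/2$ indices on which $\max|g|>C_3\delta^{-1/2}\tau$ or $\max|g'|>C_3\delta^{-1/2}\tau\la$, provided $C_3$ is taken large enough. On the surviving $\ge(1/3-2C_2^{-2})\delta\la/2$ intervals the triangle inequality for $h=f+g$ gives $|h|\le\al'$ and $|h'|\le\beta'\la$ uniformly, and these intervals have combined Lebesgue measure at least $\tfrac{(1/3-2C_2^{-2})\delta\la}{2}\cdot\tfrac{2\gamma}{\la}=(1/3-2C_2^{-2})\delta\gamma$. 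This is precisely the defining condition for $\Bv_{f+g}\in\CU$.

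For the reformulation $\{\Bv\in\R^{2N}:d_2(\CE_e,\Bv)\le\tau\sqrt N\}\subset\CU$, any $\Bv$ in the $\tau\sqrt N$-neighborhood decomposes as $\Bv=\Bv_f+\Bv_g$ with $\Bv_f\in\CE_e$ and $\|\Bv_g\|_2\le\tau\sqrt N$; declaring $g$ to be the function whose coefficient vector is $\Bv_g$, the relation $\|\Bv_g\|_2=\sqrt N\,\|g\|_2$ (coming from the $1/\sqrt N$ prefactor in the definition of $F$) gives $\|g\|_2\le\tau$, and the previous paragraph yields $\Bv=\Bv_{f+g}\in\CU$. I do not anticipate any substantive obstacle: the analytic heavy lifting (large sieve plus Markov plus Taylor) has already been executed, and the only bookkeeping item is the norm matchup between coefficient space and function space, which is immediate from the chosen normalization.
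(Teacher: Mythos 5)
Your proof is correct and follows essentially the same route the paper uses: the claim is obtained by reassembling the two-step large-sieve-plus-Markov-plus-Taylor argument immediately preceding it, together with the observation that the $1/\sqrt{N}$ normalization makes $\|\Bv_g\|_2=\sqrt{N}\,\|g\|_2$. In fact you silently repair two small typos in the paper's derivation (the paper writes $B(x_j,\gamma/N)$ for $B(x_j,\gamma/\la)$ and mixes $\delta^{-1}$ with $\delta^{-1/2}$ in the expression for $\al'$), which your version handles consistently.
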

We next show that $\P(\Bv_{f} \in \CU)$ is smaller than $1/2$. Indeed, for each $F$, let $B(f)$ be the measurable set of  $x\in \BT$  such that $\{|f(x)| \le \al'\} \wedge \{f'(x)| \le  \beta' \la \}$. Then the Lebesgue measure of $B(f)$, $\mu(B(f))$, is bounded by 
$$\E \mu(B(f)) = \int_{x \in  \BT} \P(\{|f(x)| \le \al'\} \wedge \{|f'(x)| \le \la \beta'\}) dx = O(\al' \beta'),$$
where we used Theorem \ref{thm:repulsion} for each $x$.  It thus follows that $\E \mu(B(f)) = O(\al' \beta')$. So by Markov inequality,
\begin{equation}
\label{eq-1011}
\P(\Bv_{f} \in \CU) \le \P\big ( \mu(B(f)) \ge (1/3 - 2 C_2^{-2})\delta \gamma \big ) = O(\al' \beta'/\delta \gamma) <1/2
\end{equation}
if $\al, \beta$ are as in \eqref{eqn:parametersthm} and then $\gamma, \tau$ are chosen appropriately, for instance as
\begin{equation}\label{eqn:parameters}
\gamma \asymp \delta^{5/4}, \tau \asymp \delta^2. 
\end{equation}


\begin{proof}(of Theorem \ref{thm:exceptional}) By Theorems \ref{thm:bounded} and \ref{thm:sobolev}, and by Claim \ref{claim2} and \eqref{eq-1011} we have
$$\P(\Bv\in \CE_e) \le e^{-c\tau^4 N}.$$
\end{proof}

\section{Roots over unstable intervals}\label{section:lowertail}

In this section we show the following lemma.

\begin{proposition}\label{prop:manyroots} Let $\eps$ be given as in Theorem \ref{thm:main}. Assume that the parameters $R, \al, \beta, \tau$ are chosen as in \eqref{eqn:R}, \eqref{eqn:parametersthm} and \eqref{eqn:parameters}. Assume that there are $\delta \la$ disjoint intervals $I$ of length $R/\la$ over which there are at least $\eps \la /2$ roots, then there exist a measurable set $A \subset [0,1]$ of measure at least $c\eps/4$ over which 
$$\max_{x\in A}|f(x)| \le \al \mbox{ and } \max_{x\in A} |f'(x)| \le \beta \la.$$ 
\end{proposition}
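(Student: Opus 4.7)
The plan is to show that the hypothesized clustering of roots in $\delta\lambda$ intervals forces both $f$ and $f'$ to be simultaneously small on a set of measure $\ge c\eps/4$. Three ingredients drive the argument: Rolle's theorem to produce critical points of $f$, Taylor expansion around those critical points with the $L^\infty$ derivative bounds from Theorem \ref{thm:largesieve} (as already deployed in the proof of Theorem \ref{thm:exceptional}), and Jensen's formula on a complex analytic continuation of $f$ to handle intervals where the roots are not tightly packed.

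\textbf{Step 1 (Rolle + MVT).} In each interval $I_j$ with $k_j$ roots $x_{j,1}<\cdots<x_{j,k_j}$, Rolle gives critical points $y_{j,\ell}\in(x_{j,\ell},x_{j,\ell+1})$ with $f'(y_{j,\ell})=0$ and $|f(y_{j,\ell})|\le \|f'\|_{\infty,I_j}\cdot g_{j,\ell}/2$, where $g_{j,\ell}=x_{j,\ell+1}-x_{j,\ell}$. Working on the high-probability event $\sum|\eps_\mu|^2\le 4N$ (exponentially likely in both the bounded and log-Sobolev cases, as already used in Theorem \ref{thm:exceptional}), Theorem \ref{thm:largesieve} applied to a $(1/\lambda)$-net yields $\|f'\|_{\infty,I_j}\lesssim \lambda$ and $\|f''\|_{\infty,I_j}\lesssim \delta^{-1/2}\lambda^2$ on most of the $I_j$.

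\textbf{Step 2 (Tight-gap neighborhoods).} Call a gap \emph{tight} if $g_{j,\ell}\le g_0:=2\alpha/\|f'\|_\infty\asymp \delta^{3/2}/\lambda$. For tight gaps we get $|f(y_{j,\ell})|\le \alpha/2$, and a second-order Taylor expansion around $y_{j,\ell}$ together with the $|f''|$ bound above shows that $|f(x)|\le \alpha$ and $|f'(x)|\le \beta\lambda$ simultaneously on $[y_{j,\ell}-\gamma/\lambda,y_{j,\ell}+\gamma/\lambda]$ with $\gamma\asymp \delta^{5/4}$, exactly the scale appearing in \eqref{eqn:parameters}. Each tight gap therefore contributes at least $\asymp \delta^{5/4}/\lambda$ to $|A|$.

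\textbf{Step 3 (Jensen fallback for loose gaps).} For intervals $I_j$ in which the root count $k_j$ is large but most gaps are loose, Taylor alone is too weak. I would extend $f(t)=F(\gamma(t))$ holomorphically to a complex disk of radius $\asymp R/\lambda$ around the midpoint of $I_j$; the analyticity of $\gamma$ from Condition \ref{cond-curve} and the $\epsilon_\mu$-bounds give $\|f\|_{L^\infty(\text{disk})}\lesssim \sqrt{N}\,e^{O(R)}$. Jensen's formula then forces $|f(\mathrm{mid})|\lesssim e^{O(R)}\sqrt N\cdot 2^{-k_j}$, and a Cauchy-integral derivative estimate gives the analogous exponential smallness for $|f'|$; a Taylor expansion propagates this to a real sub-interval of length $\asymp 1/\lambda$ inside $I_j$, producing a second source of contributions to $A$.

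\textbf{Step 4 (Dyadic aggregation).} A dyadic pigeonhole on the root counts $\{k_j\}$ extracts a single scale that carries a $\Theta(1/\log N)$ fraction of the $\eps\lambda/2$ roots. At the selected scale, intervals either have enough tight gaps (Step 2) or enough total root count to trigger the Jensen regime (Step 3); the $4/\lambda$-separation between the $I_j$ together with Theorem \ref{thm:largesieve} controls overlap of the associated neighborhoods, so the contributions add up to $|A|\ge c\eps/4$ for an absolute $c$ (depending only on the curvature constant of $\gamma$).

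\textbf{Main obstacle.} The delicate point is the balancing in Step 4: one must check that the per-gap contribution $\delta^{5/4}/\lambda$ in Step 2 together with the sub-interval contribution $\asymp 1/\lambda$ in Step 3 compose additively through the dyadic pigeonhole to recover a bound proportional to $\eps$ rather than to some power of $\delta$. Tight gaps can cluster (their neighborhoods then overlap heavily), so a greedy extraction of $4/\lambda$-separated critical points is needed before applying the large sieve to control multiplicities; and the Jensen step must survive the passage through the restriction to a curved $\gamma\subset\T^2$, which is where the analyticity half of Condition \ref{cond-curve} is essential.
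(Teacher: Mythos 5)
Your instinct to reach for Jensen's formula is the right one, but the accounting does not close, and the paper's proof is considerably simpler than what you propose. Your Step~2 assigns a set of measure $\asymp \gamma/\lambda \asymp \delta^{5/4}/\lambda$ to each tight gap; with at most $\Theta(\eps\lambda)$ gaps in total this contributes $O(\eps\,\delta^{5/4})$ to $|A|$, which is far below the required $c\eps/4$. Your Step~3 harvests only a fixed sub-interval of length $\asymp 1/\lambda$ per interval $I_j$, so across $M\le\delta\lambda$ intervals it contributes $O(\delta)$, again strictly smaller than $\eps$ when $\delta\asymp\eps^{9/8}$ as the final exponent $e^{-c\eps^9 N}$ dictates. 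Your own ``main obstacle'' paragraph correctly senses that these pieces cannot recombine (through a dyadic pigeonhole or otherwise) to yield $\Theta(\eps)$; the pigeonhole in Step~4 only loses a further $\log N$ factor and does not create new measure. There is a second problem: applying Jensen directly to an unexpanded interval of length $R/\lambda$ gives a contradiction only when the root count $m_j$ exceeds $\Theta(R)=\Theta(\log N)$; since the surviving intervals only satisfy $m_j\ge \eps\delta^{-1}/4$, which need not dominate $R$, the Jensen step as you formulate it may not even fire.

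The missing idea in the paper's proof (and it replaces your Steps~1, 2, 4 entirely) is to \emph{expand} each interval $J_j$ to a larger interval $\bar J_j$ of length $\lceil c m_j/R\rceil\cdot(R/\lambda)\approx c m_j/\lambda$, i.e.\ length proportional to the number of roots it contains, and then merge overlapping expansions into disjoint, $R/\lambda$-separated intervals $\bar J_i'$ carrying $m_i'$ roots with $\sum_i m_i'\gtrsim\eps\lambda$. On an expanded interval of length $c m_i'/\lambda$ one has $\lambda|\bar J_i'|\asymp m_i'$, so Corollary~\ref{cor:J} applies with the ``right'' normalization regardless of how $m_i'$ compares to $R$: if either $\max_{\bar J_i'}|F|\ge\exp(-c m_i'/2)$ or $\max_{\bar J_i'}|F'|\ge\lambda\exp(-c m_i'/2)$ held, the corollary would cap the number of roots by $2c\cdot c m_i'<m_i'$ for $c$ small, a contradiction. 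Thus $|F|$ and $|F'|/\lambda$ are $\le\exp(-c m_i'/2)\le N^{-O(1)}\le\min(\al,\beta)$ on \emph{all} of $\bar J_i'$ (here $m_i'\ge\eps\delta^{-1}/4\gg\log N$), so the set $A=\bigcup_i\bar J_i'$ has measure $\sum_i c m_i'/\lambda\ge c\eps/4$. No Rolle, no Taylor, no critical-point analysis, no dyadic scale selection is needed: the expansion step converts ``many roots per interval'' directly into ``proportionally large set on which Jensen forces smallness,'' which is exactly the linear-in-$\eps$ measure you were unable to extract.
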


Before proving this result, we deduce that non-exceptional polynomials cannot have too many roots over the unstable intervals.

\begin{cor}\label{cor:manyroots}   Let the parameters $R, \eps, \al, \beta, \tau$ and $\delta$ be as in Proposition \ref{prop:manyroots}. Then a non-exceptional $F$ cannot have more than $\eps \la/2$ roots over any $\delta \la$ intervals $I_i$ from Section \ref{section:exceptional}. In particularly, $F$ cannot have more than $\eps \la/2$ roots over the unstable intervals. 
\end{cor}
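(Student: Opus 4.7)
The plan is a contradiction argument that combines the defining property of a non-exceptional $F$ with Proposition~\ref{prop:manyroots}. Suppose $F$ is non-exceptional, yet there is a collection of $\delta\la$ intervals among the $I_i$ over which $F$ has strictly more than $\eps\la/2$ zeros. Then the hypotheses of Proposition~\ref{prop:manyroots} are satisfied, so I obtain a measurable set $A\subset[0,1]$ with $|A|\ge c\eps/4$ on which both $|F(x)|\le \al$ and $|F'(x)|\le \beta\la$ hold simultaneously.

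The next step is to localize $A$ inside the unstable intervals. By the definition in Section~\ref{section:exceptional}, an interval $I_i$ is stable exactly when no $x\in 3I_i$ satisfies both $|F(x)|\le\al$ and $|F'(x)|\le\beta\la$. Since every $x\in[0,1]$ lies in some $I_i$ and hence in $3I_i$, a point of $A$ cannot belong to a stable interval. Therefore
\[
A\ \subset\ \bigcup_{j:\, I_j\ \text{unstable}} 3I_j.
\]
As $F$ is non-exceptional, fewer than $\delta\la$ indices $j$ are unstable, each $3I_j$ has length $3R/\la$, and subadditivity yields $|A|\le 3\delta R$.

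Combining the two bounds gives $c\eps/4\le |A|\le 3\delta R$, and the proof will be completed by verifying that the parameter choices force $3\delta R<c\eps/4$, producing the contradiction. With $R=C\log N$ from \eqref{eqn:R} and the couplings $\al\asymp\delta^{3/2}$, $\beta\asymp\delta^{3/4}$, $\gamma\asymp\delta^{5/4}$, $\tau\asymp\delta^2$ from \eqref{eqn:parametersthm}--\eqref{eqn:parameters}, one selects $\delta$ as a suitable power of $\eps$ within the admissible range $N^{-c'}\le\eps\le c'/\log N$ (so in particular $\delta\log N\ll\eps$); such a choice simultaneously realizes $3\delta R<c\eps/4$ and the exponent bound $\delta^8 N\gtrsim \eps^9 N$ required for the overall concentration rate in Theorem~\ref{thm:main}. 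The in-particular clause is then immediate, since the collection of all unstable intervals has cardinality less than $\delta\la$ and hence falls within the case already excluded. The only real difficulty is bookkeeping: aligning the polynomial relations among $\al,\beta,\gamma,\tau,\delta$ with both the contradiction bound $\delta R<c\eps/12$ and the target tail rate, a matching of constants that is routine rather than a genuine obstacle.
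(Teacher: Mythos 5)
Your proof is correct and follows essentially the same route as the paper: apply Proposition~\ref{prop:manyroots} to produce a set $A$ with $|A|\ge c\eps/4$ on which $|F|\le\al$ and $|F'|\le\beta\la$, observe that by the definition of stability no point of $A$ can lie in a stable interval, and conclude that $A$ must sit inside the (small) union of unstable intervals, contradicting the parameter constraint on $\delta R$. One cosmetic remark: since $x\in A$ already lies in some $I_i$ and $I_i\subset 3I_i$, the inclusion $A\subset\bigcup_{j\ \text{unstable}} I_j$ holds directly, so the factor of $3$ in your bound $|A|\le 3\delta R$ is unnecessary (the paper uses $\delta R < c\eps/8$), though it does not affect the validity of the argument once the parameters are tuned.
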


\begin{proof}(of Corollary \ref{cor:manyroots}) If $F$ has more than $\eps \la/2$ roots over some $\delta \la$ intervals $I_i$, then  Proposition \ref{prop:manyroots} implies the existence of 
a set $A=A(F)$ that intersects with the set of stable intervals (because the total size of the unstable intervals is at most $\delta \la R/\la = \delta R <c\eps /8$), so that
 $\max_{x\in A}|F(x)| \le \al$ and $\max_{x\in A} |F'(x)| \le \beta \la$. However, this is impossible because for any 
 $x$ in the union of the stable intervals we have either $|F(x)|> \al$ or $|F'(x)| > \beta \la$.
\end{proof}


We now discuss the proof of Proposition \ref{prop:manyroots}. We first recall the following Jensen's bound, under analytic assumption
$$|\{w\in B(z,r): \psi(w)=0\}| \le \frac{\log\frac{M}{m}}{\log \frac{R^2 +r^2}{2Rr}}$$
where $M=\max_{w\in \bar{B}(z,R)}|\psi(w)|, m=\max_{w\in \bar{B}(z,r)}|\psi(w)|$.

In what follows 
$$F(z) := \sum_\mu a_\mu e^{i \langle \mu, \gamma(z) \rangle}.$$

\begin{lemma}\label{lemma:J}  Let $I$ be any interval with length $|I| \ll 1$. Assume that $\sum_\mu |a_\mu| \ll  N$. Then there exists a constant $c$ (depending on $\gamma$) such that
$$\left  |\left \{z \in I+B(0, \delta): F(z)=0\right \}\right  | \le c \lambda |I| +\log N - \log \max_{t\in I} \left  |F(t)\right  |.$$
\end{lemma}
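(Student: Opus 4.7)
The plan is to invoke the Jensen-type bound quoted immediately before the lemma, after analytically continuing $F$ from $[0,1]$ to a complex strip. Since $\gamma$ is real analytic on $[0,1]$ by Condition~\ref{cond-curve}, each of its two coordinates extends to a holomorphic function on a fixed strip $\Omega = \{z \in \C : |\operatorname{Im} z| < c_\gamma\}$, and consequently so does $F(z) = \sum_\mu a_\mu e^{i\langle \mu, \gamma(z)\rangle}$. The size control that drives the argument is the elementary estimate $|e^{i\langle \mu, \gamma(z)\rangle}| = e^{-\operatorname{Im}\langle \mu, \gamma(z)\rangle} \le e^{C_\gamma |\mu|\,|\operatorname{Im} z|}$, valid for $z \in \Omega$ with a constant $C_\gamma$ depending only on the analyticity bounds of $\gamma$.

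To set up Jensen, I let $z_0$ denote the midpoint of $I$ and choose $r = |I|/2 + \delta$ and $R = 2r$. In the regime where the lemma gets applied one has $|I|, \delta \ll 1$, so one may assume $R < c_\gamma/2$; should this fail, $I$ is first subdivided into an absolute constant number of sub-intervals on each of which the argument is repeated. Now $I + B(0, \delta) \subset \bar B(z_0, r)$, so it suffices to bound the number of zeros of $F$ in $\bar B(z_0, r)$. The cited Jensen inequality gives
$$\#\{w \in B(z_0, r) : F(w) = 0\} \le \frac{\log M - \log m}{\log\frac{R^2 + r^2}{2Rr}} = \frac{\log M - \log m}{\log(5/4)},$$
where $M = \max_{\bar B(z_0, R)} |F|$ and $m = \max_{\bar B(z_0, r)} |F|$.

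For the numerator, the triangle inequality together with the strip bound and $|\mu| \le \lambda$ yield
$$M \le \sum_\mu |a_\mu|\, e^{C_\gamma |\mu| R} \le \Big( \sum_\mu |a_\mu| \Big) e^{C_\gamma \lambda R} \ll N\, e^{C_\gamma \lambda R},$$
where I have used the hypothesis $\sum_\mu |a_\mu| \ll N$, so $\log M \le \log N + C_\gamma \lambda R + O(1)$. Since the real interval $I$ is contained in $\bar B(z_0, r)$, we also have $m \ge \max_{t \in I} |F(t)|$. Substituting these two bounds into Jensen and recalling $R \asymp |I| + \delta$ produces the claimed estimate, with the $\delta$ term absorbed into the constant $c$ in the regime $\delta \lesssim |I|$ relevant for the applications. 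The only real subtlety, and thus the main obstacle of this plan, is the joint choice of $R$ and $r$: one needs $R/r$ bounded away from $1$ so that $\log\frac{R^2 + r^2}{2Rr}$ is an absolute positive constant, while simultaneously keeping $\bar B(z_0, R)$ inside the analyticity strip of $\gamma$; the choice $R = 2r$, combined if necessary with the optional subdivision of $I$, handles both requirements cleanly, and beyond this everything reduces to the two inputs already in hand, namely $\sum_\mu |a_\mu| \ll N$ and the real-analytic continuation of $\gamma$.
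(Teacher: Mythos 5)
Your proposal is correct and follows essentially the same route as the paper: analytically continue $F$ via the real-analyticity of $\gamma$, bound $|F|$ on a complex disk by the triangle inequality together with $|e^{i\langle\mu,\gamma(z)\rangle}|\le e^{O(\lambda|\operatorname{Im}z|)}$ and $\sum_\mu|a_\mu|\ll N$, and then apply the quoted Jensen inequality with concentric disks of bounded radius ratio centered in $I$. The only point worth flagging is your explicit restriction to $\delta\lesssim|I|$ so that $R\asymp|I|$; the paper's own choice of radii (written as $4\delta$ and $\delta$) is silently making an analogous identification of scales, and in the applications (Corollary~\ref{cor:J}) $\delta$ is a free padding parameter that one may simply take of order $|I|$, so nothing is lost.
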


\begin{proof}[Proof of Lemma \ref{lemma:J}] For $z\in I+B(0, 2|I| ), \exists t \in \R$ such that  $|z-t| < 4|I| $, 
$$|\gamma(z) - \gamma(t)| \le c |I|.$$
Hence for $\mu \in \CE_\lambda,$
$$\left |e^{i  \langle \mu, \gamma(z) \rangle }\right |= \left |e^{i  \langle \mu, \gamma(z)-\gamma(t)\rangle }  \right |\le e^{c \lambda |I|}.$$
Therefore 
$$\left  |F(z)\right  | \le \left (\sum_{\mu \in \CE_\lambda} |a_{\mu}|\right ) e^{c \lambda |I|} < \sqrt{N} e^{c \lambda |I|}.$$
Jensen's inequality (applied to the case that $B(z,4\delta), B(z,\delta)$) implies
\begin{align*}
\left  |\left \{z \in I+B(0, \delta), F(z)=0\right \}\right  | &\le \log \left (\sqrt{N} e^{c \lambda |I|}\right ) - \log \max_{t\in I} \left |F(t)\right  | \\
&\le c \lambda |I| + \log N -  \log \max_{t\in I}\left  |F(t)\right |. 
\end{align*}
\end{proof}

As a consequence we obtain the following

\begin{cor}\label{cor:J} Assume that $|I| \gg \frac{\log N}{\lambda}$. Assume that $\sum_\mu |a_\mu| \ll  N$ and one of the following holds, 
\begin{itemize}
\item $\max_{t\in I}|F(t)| \ge \exp(-c \la |I| /2)$; 
\vskip .1in
\item $\max_{t\in I}|F'(t)| \ge \la \exp(-c \la |I|  /2)$. 
\end{itemize}
Then we have 
$$ |\{t \in I, F(t)=0\}| \le 2 c |I|  \la.$$
\end{cor}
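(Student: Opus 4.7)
The plan is to split on the two hypotheses and, in each case, invoke Lemma~\ref{lemma:J} (or a direct analogue for $F'$) together with the assumption $|I| \gg \log N/\lambda$. We interpret this assumption as $|I| \ge C_0 \log N/\lambda$, where $C_0$ is a constant we may choose large enough so that $\log N \le c\lambda|I|/2$; all other implicit constants in the argument will be consistent with the $c$ appearing in Lemma~\ref{lemma:J} and in the two bullet hypotheses.

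In the first case, $\max_{t\in I}|F(t)| \ge \exp(-c\lambda|I|/2)$, the conclusion follows directly from Lemma~\ref{lemma:J}: the right-hand side $c\lambda|I| + \log N - \log\max_{t\in I}|F(t)|$ is bounded by $(3c/2)\lambda|I| + \log N$, and the choice of $C_0$ then yields $\le 2c\lambda|I|$.

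The second case, $\max_{t\in I}|F'(t)| \ge \lambda\exp(-c\lambda|I|/2)$, I would handle by reducing to a zero-count for $F'$ via Rolle's theorem and reapplying the Jensen argument to $F'$ in place of $F$. Writing
$$F'(z) = \sum_{\mu} i\langle \mu,\gamma'(z)\rangle\, a_\mu\, e^{i\langle \mu,\gamma(z)\rangle},$$
and using that $|\mu| \asymp \lambda$ for $\mu\in\CE_\lambda$, that $\gamma'$ extends analytically to a complex neighborhood of $[0,1]$ with $|\gamma'(z)|$ bounded, and the same estimate $|e^{i\langle\mu,\gamma(z)-\gamma(t)\rangle}| \le e^{c\lambda|I|}$ used in the proof of Lemma~\ref{lemma:J}, one obtains
$$|F'(z)| \le C\lambda\sqrt{N}\,e^{c\lambda|I|} \quad \text{on } I+B(0,4\delta).$$
Applying Jensen's inequality exactly as in Lemma~\ref{lemma:J} (with $R=4\delta,\, r=\delta$) gives
$$|\{z\in I+B(0,\delta):\, F'(z)=0\}| \le c\lambda|I| + \log(\lambda N) - \log\max_{t\in I}|F'(t)|.$$
Inserting $\log\max_{t\in I}|F'(t)| \ge \log\lambda - c\lambda|I|/2$ produces the same bound $(3c/2)\lambda|I| + \log N \le 2c\lambda|I|-1$ as in the first case. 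Finally, Rolle's theorem says the zeros of $F$ on $I$ exceed those of $F'$ on $I$ by at most one, so $|\{t\in I: F(t)=0\}| \le 2c\lambda|I|$.

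The argument has no essential obstacle; it is a clean two-fold application of Jensen's bound together with Rolle in the derivative case. The only technical care required is (i) verifying that $F'$ enjoys the same exponential-growth estimate on the complex neighborhood, picking up only a factor of $\lambda$ from the $|\mu|$, and (ii) keeping the constant $c$ coordinated across the hypothesis, Lemma~\ref{lemma:J}, and the conclusion, so that the final bound is of the form $2c\lambda|I|$ with the very same $c$.
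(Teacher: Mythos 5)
Your proof is correct and follows essentially the same route as the paper: in the first case invoke Lemma~\ref{lemma:J} directly, and in the second case apply Jensen's bound to $F'$ after verifying the analogous exponential-growth estimate on a complex neighborhood of $I$, then use the fact that the number of real zeros of $F$ on $I$ exceeds that of $F'$ by at most one (the paper phrases this as $|\{t\in I, F(t)=0\}| \le |\{t\in I, F'(t)=0\}|+1$ rather than citing Rolle, but it is the same observation). The only cosmetic difference is that the paper normalizes to $\tfrac{1}{\lambda}F'$ before applying Jensen, whereas you carry the extra factor of $\lambda$ explicitly and let it cancel against the $\log\lambda$ in the hypothesis --- this is equivalent.
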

\begin{proof}(of Corollary \ref{cor:J}) It is clear that if $\max_{t\in I}|F(t)| \ge \exp(-c \la |I| /2)$ then Lemma \ref{lemma:J} implies the claim. Now assume $\max_{t\in I}|F'(t)| \ge \la \exp(-c \la |I|  /2)$. For $z\in I+B(0, 2|I| ), \exists t \in \R$ such that  $|z-t| < 4|I| $ such that $|\gamma(z) - \gamma(t)| \le c |I|$. Hence for $\mu \in \CE_\lambda$, as before we have $\left |e^{i  \langle \mu, \gamma(z) \rangle }\right | \le e^{c \lambda |I|}$ as well as $|\gamma'(z)| \le |\gamma'(t)|+ c'|I| = 1+c'|I|$. The later implies that $|\langle \mu/\la, \gamma'(z)\rangle| \ll 1$.  Therefore 
$$\left  |\frac{1}{\la}F'(z)\right  | \ll \left (\sum_{\mu \in \CE_\lambda} |a_{\mu}|\right ) e^{c \lambda |I|} \ll \sqrt{N} e^{c \lambda |I|}.$$
To this end, as $|\{t \in I, F(t)=0\}|  \le  |\{t \in I, F'(t)=0\}|+1$, and by Jensen's inequality the later can be bounded by 
\begin{align*}
\left  |\left \{z \in I+B(0, \delta), \frac{1}{\la}F'(z)=0\right \}\right  | &\le \log \left (\sqrt{N} e^{c \lambda |I|}\right ) - \log \max_{t\in I} \left |\frac{1}{\la}F'(t)\right  | \\
&\le c \lambda |I| + \log N +  c \lambda |I|/2  \le 2 c \lambda |I|. 
\end{align*}

\end{proof}

\begin{proof}(of Proposition \ref{prop:manyroots}) Among the $\delta \la$ intervals we first throw away those of less than $\eps \delta^{-1}/4$ roots, hence there are at least $\eps \la/4$ roots left. For convenience we denote the remaining intervals by $J_1,\dots, J_M$, where $M \le \delta \la$,  and let $m_1,\dots, m_M$ denote the number of roots over each of these intervals respectively. 


In the next step we  expand the intervals $J_j$ to larger intervals $\bar J_j$ 
(considered as union of consecutive closed intervals appearing at the beginning of Section \ref{section:exceptional}) of length $\lceil c m_j/R \rceil \times (R/\la)$ for some small constant $c$. Furthermore, if the expanded intervals $\bar J_{i_1}',\dots, \bar J_{i_k}'$ of $\bar J_{i_1},\dots, \bar J_{i_k}$ form an intersecting chain, then we create a longer interval
 $\bar J'$ of length $\lceil c(m_{i_1}+\dots+m_{i_k})/R \rceil \times (R/\la)$, which contains them and therefore contains at least $m_{i_1}+\dots+m_{i_k}$ roots. After the merging process,
 we obtain a collection 
 $\bar J_1',\dots, \bar J_{M'}'$ with the number of roots $m_1',\dots, m_{M'}'$ respectively, so that $\sum m_i'\geq \eps \la/2$.
  Note that now $\bar J_i'$ has length $\lceil cm_i'/R \rceil \times (R/\la) \approx cm_i'/\la$ (because $\eps \delta^{-1}$ is sufficiently large compared to $R$) and the intervals are $R/\la$-separated.
Now as $c$ is sufficiently small, over each $J_i'$ of length $cm_i'/\la \gg \log N/\la$ there are $m_i'$ roots, by Corollary \ref{cor:J} we must have
\begin{equation} 
\max_{t\in J_i'}|F(t)| \le \exp(-c m_i' /2) \mbox{ and } \max_{t\in J_i'}|F'(t)| \le \la \exp(-c m_i' /2).
\end{equation}
As $\al, \beta$ are of order at least $N^{-O(1)}$, so we automatically have in this case that $\max_{t\in J_i'}|F(t) \le \al$ and $\max_{t\in J_i'}|F'(t) \le \beta \la$.

Letting $A$ denote the union of all such intervals $J_{i}'$. Then we have  $\max_{x \in A} |F(x)| \le \al$ and $\max_{x \in A} |F'(x)|\le \beta \la$ and
$$\mu(A) \ge \sum_i cm_i'/\la \ge  c \eps/4.$$
\end{proof}


We conclude the section by a quick consequence of our lemma. For each $F$ that is not exceptional we let $S(F)$ be the collection of intervals over which $F$ is stable. Let $N_s(F)$ denote the number of roots of $F$ over the set $S(F)$ of stable intervals.

\begin{cor}\label{cor:control} With the same parameters as in  Corollary \ref{cor:manyroots}, we have
$$\P\Big(N_s(F) 1_{F \in \CE_e^c} \le \E \CZ(F)- \eps \la\Big)=o(1)$$
and
$$\E \Big(N_s(F) 1_{F \in \CE_e^c}\Big) \ge \E \CZ(F)- 2 \eps \la/3.$$
\end{cor}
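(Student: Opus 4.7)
The plan is to combine three ingredients that have already been established: Corollary \ref{cor:manyroots}, which forces $N_s(F) \ge \CZ(F) - \eps\la/2$ on the event $\{F \in \CE_e^c\}$; Theorem \ref{thm:exceptional}, which gives $\P(F \in \CE_e) \le e^{-c\delta^8 N}$; and the Markov-type bound \eqref{eqn:Markov'}, which yields $\P(|\CZ(F) - \E\CZ(F)| \ge \eps\la/2) \ll 1/(N^{c'}\eps^2)$. Under the parameter ranges of Theorem \ref{thm:main} and \eqref{eqn:parametersthm} (notably $\delta > N^{-2/5}$ and $\eps \ge N^{-c'}$), both of these probability bounds are $o(1)$.

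For the tail statement, the key observation is that on $\{F \in \CE_e^c\}$ Corollary \ref{cor:manyroots} gives $N_s(F) = \CZ(F) - \#\{\text{roots in unstable intervals}\} \ge \CZ(F) - \eps\la/2$. Consequently
$$\{N_s(F) 1_{F\in \CE_e^c} \le \E\CZ(F) - \eps\la\} \subseteq \{F \in \CE_e\} \cup \{\CZ(F) \le \E\CZ(F) - \eps\la/2\},$$
and a union bound together with the two inputs above gives the desired $o(1)$ estimate.

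For the expectation statement, I would write
$$\E\bigl[N_s(F) 1_{F\in \CE_e^c}\bigr] \ge \E\bigl[(\CZ(F) - \eps\la/2) 1_{F\in \CE_e^c}\bigr] = \E \CZ(F) - \E\bigl[\CZ(F) 1_{F\in \CE_e}\bigr] - \frac{\eps\la}{2}\P(F \in \CE_e^c).$$
The last term contributes at most $\eps\la/2$. The middle term is handled by Cauchy--Schwarz,
$$\E\bigl[\CZ(F) 1_{F\in \CE_e}\bigr] \le \sqrt{\E \CZ(F)^2}\,\sqrt{\P(F\in \CE_e)} \ll \la\, e^{-c\delta^8 N/2},$$
where the bound $\E \CZ(F)^2 \ll \la^2$ comes from $\E\CZ(F) \asymp \la$ and $\Var \CZ(F) = O(\la^2/N^{c'})$ in Theorem \ref{theorem:general}. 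Since this contribution is exponentially small in $N$, it is well below $\eps\la/6$, and adding these two errors gives the claimed lower bound $\E\CZ(F) - 2\eps\la/3$. No step is expected to present a substantive obstacle: all the geometric and probabilistic content has been absorbed into Theorem \ref{thm:exceptional} and Proposition \ref{prop:manyroots}, and the present corollary is a routine bookkeeping combination whose only delicate point is checking that each error term is of smaller order than $\eps\la$ under the stipulated parameter ranges.
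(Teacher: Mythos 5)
Your argument matches the paper's proof: both bounds are obtained by splitting over the event $\{F \in \CE_e^c\}$, using Corollary \ref{cor:manyroots} to pass between $N_s(F)$ and $\CZ(F)$ (since $N_{us}(F)\le \eps\la/2$ for non-exceptional $F$), and then controlling the error terms by Theorem \ref{thm:exceptional} together with the Markov bound \eqref{eqn:Markov'}. The only cosmetic difference is in bounding $\E\bigl[\CZ(F)\,1_{F\in\CE_e}\bigr]$ for the expectation statement: the paper simply uses the deterministic bound $\CZ(F)=O(\la)$ from Theorem \ref{thm:restriction:size}, whereas you apply Cauchy--Schwarz together with the second-moment bound from Theorem \ref{theorem:general}; both yield an exponentially small contribution, so the variation is immaterial.
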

\begin{proof}(of Corollary \ref{cor:control}) For the first bound, by Corollary \ref{cor:manyroots}, if $N_s(F) 1_{F \in \CE_e^c} \le \E \CZ(F)- \eps \la$ then $\CZ(F) 1_{F \in \CE_e^c} \le \E \CZ(F)- \eps \la/2$. Thus
\begin{align*}
\P\big(N_s(F) 1_{F \in \CE_e^c} \le \E \CZ(F)- \eps \la \big) &\le \P\big(\CZ(F)(F) 1_{F \in \CE_e^c} \le \E \CZ(F)- \eps \la/2 \big) \\
&\le \P\big(\CE_e^c \wedge \CZ(F) \le \E \CZ(F)- \eps \la/2 \big) + \P(\CE_e)=o(1),
\end{align*}
where we used \eqref{eqn:Markov} and Theorem \ref{thm:exceptional}. For the second bound regarding $\E(\CZ(F) 1_{F \in \CE_e^c})$ , let $N_{us}(F)$ denote the number of roots of $F$ over the set of unstable intervals. By Corollary \ref{cor:manyroots}, for non-exceptional $F$ 
 we have that
 $N_{us}(F)\le \eps \la/2$, and hence trivially $\E (N_{us}(F) 1_{F \in \CE_e^c}) \le \eps \la /2$. Because each $F$ has $O(\la)$ roots by Theorem \ref{thm:restriction:size}, we then obtain
\begin{align*}
\E (N_s(F) 1_{F \in \CE_e^c}) &\ge \E \CZ(F) - \E (N_{us}(F) 1_{F \in \CE_e^c}) - \E (\CZ(F) 1_{F \in \CE_e})\\
&\ge   \E \CZ(F) - \eps \la/2 - O( \la \times  e^{-c\tau^4 \la}) \ge \E \CZ(F) - 2\eps \la/3.
\end{align*}
\end{proof}

\section{proof of Theorem \ref{thm:main}: completion}


We first give a deterministic result (see also \cite[Claim 4.2]{NS}) to control the number of roots under perturbation.

\begin{lemma}\label{lemma:separation} Fix strictly 
positive numbers $\mu$ and $\nu$. Let $I=(a,b)$ be an  interval of length greater than $2\mu/\nu$, and let $f$ be a  $C^1$-function on $I$ such that at each point $x\in I$ we have either $|f(x)|> \mu$ or $|f'(x)| > \nu$. Then for each root $x_i \in I$ with $x_i-a>\mu/\nu$ and $b-x_i>\mu/\nu$ there exists an interval $I(x_i) = (a',b')$ where $f(a')f(b')<0$ and $|f(a')|=|f(b')|=\mu$,
such that $x_i \in I(x_i) \subset (x_i-\mu/\nu, x_i + \mu/\nu)$ and the intervals $I(x_i)$ over the roots are disjoint.
\end{lemma}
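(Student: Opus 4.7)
\textbf{Proof plan for Lemma \ref{lemma:separation}.} The plan is to construct $I(x_i)$ explicitly by walking outward from $x_i$ until $|f|$ first hits the level $\mu$. Concretely, for each eligible root $x_i$ I will set
\[
b_i' := \inf\{x > x_i : |f(x)| \ge \mu\}, \qquad a_i' := \sup\{x < x_i : |f(x)| \ge \mu\},
\]
and put $I(x_i) := (a_i', b_i')$. The first task is to check these are well-defined points in the interior of $(x_i-\mu/\nu, x_i+\mu/\nu)$. For this I will argue by contradiction: if $|f|<\mu$ throughout $[x_i, x_i+\mu/\nu]$, then by the standing hypothesis $|f'|>\nu$ on this whole subinterval; continuity of $f'$ forces it to have a constant sign, so $f$ is strictly monotone, and hence
\[
\mu > |f(x_i+\mu/\nu)-f(x_i)| = |f(x_i+\mu/\nu)| > \nu \cdot (\mu/\nu) = \mu,
\]
a contradiction. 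So $b_i'$ exists and lies in $(x_i, x_i+\mu/\nu)$; by continuity $|f(b_i')|=\mu$. The same argument handles $a_i'$ (using $x_i-a>\mu/\nu$ to stay inside $I$). The bound $I(x_i) \subset (x_i-\mu/\nu, x_i+\mu/\nu)$ then follows immediately from the mean value theorem applied to $f$ on $[x_i,b_i']$: one has $\mu=|f(b_i')-f(x_i)|=|f'(\xi)|(b_i'-x_i)$ with $|f'(\xi)|>\nu$, so $b_i'-x_i<\mu/\nu$, and similarly on the left.

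Next I would verify the sign condition $f(a_i')f(b_i')<0$. By construction $|f|<\mu$ on the open interval $(a_i',b_i')$, so the hypothesis gives $|f'|>\nu$ throughout; continuity of $f'$ again forces a constant sign, so $f$ is strictly monotone on the closed interval $[a_i',b_i']$. Since $f(x_i)=0$ with $x_i \in (a_i',b_i')$, the values $f(a_i')$ and $f(b_i')$ must be $+\mu$ and $-\mu$ (in some order), giving $f(a_i')f(b_i')=-\mu^2<0$.

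The main step -- and the one I expect to be the only place where some care is required -- is the disjointness of the intervals $I(x_i)$ for distinct eligible roots. Suppose $x_i<x_j$ are two such roots; it suffices to show $b_i' \le a_j'$. If instead $b_i' > x_j$, then on $(x_i,b_i')$ we would have $|f|<\mu$, hence $|f'|>\nu$, hence $f$ strictly monotone on this interval, contradicting $f(x_i)=f(x_j)=0$ with $x_j\in(x_i,b_i')$. Thus $b_i' \le x_j$, and the symmetric argument shows $a_j'\ge x_i$. Finally, by the definitions of $b_i'$ as an infimum and $a_j'$ as a supremum, applied to the restriction of $|f|$ to $[x_i,x_j]$, we get $b_i' \le a_j'$: any value of $x$ with $b_i'<x<a_j'$ and $|f(x)|\ge \mu$ would either violate the definition of $b_i'$ (being a $\ge\mu$ point to the right of $x_i$ but smaller than $b_i'$'s candidates after adjusting) or, more cleanly, I will argue directly that the relation $b_i' > a_j'$ together with $|f(b_i')|=|f(a_j')|=\mu$ yields a monotonicity contradiction on $(a_j',b_i')$ as before. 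Since $I(x_i)$ and $I(x_j)$ are open, $b_i'\le a_j'$ is enough to conclude that they are disjoint, which completes the proof.
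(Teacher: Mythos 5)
Your construction is essentially the same as the paper's: the paper defines, for each root $x_0$, the connected component $I_t(x_0)$ of $\{|f|<t\}$ containing $x_0$, proves disjointness via Rolle's theorem and containment via the mean value theorem, and takes $I(x_i)=I_1(x_i)$ after normalizing $\mu=\nu=1$; your $(a_i',b_i')$ is exactly that component at level $\mu$, and your strict-monotonicity argument is the contrapositive of Rolle, so the two proofs differ only in order of steps and normalization. You also spell out the sign condition $f(a')f(b')<0$, which the paper compresses into a single sentence.

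The one wobble is in the closing disjointness step. Neither of the two routes you sketch there actually lands: a point $x>b_i'$ with $|f(x)|\ge\mu$ does not violate $b_i'$ being an infimum, and if $a_j'<b_i'$ then strict monotonicity of $f$ on $(a_j',b_i')$ is perfectly compatible with $f(a_j')=-f(b_i')=\pm\mu$, so there is no ``monotonicity contradiction'' there. Fortunately, once you have $b_i'\le x_j$ the conclusion is immediate from the definitions you already invoke: since $f(x_j)=0\neq\pm\mu$ you in fact have $b_i'<x_j$, and $|f(b_i')|=\mu$ places $b_i'$ in the set $\{x<x_j:|f(x)|\ge\mu\}$ whose supremum is $a_j'$, so $a_j'\ge b_i'$. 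Replace the final sentence with that one line and the proof is complete.
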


\begin{proof}(of Lemma \ref{lemma:separation}) We may and will assume that $f$ is not constant on $I$.
By changing $f(x)$ to $\la_1 f(\la_2 x)$ for appropriate $\lambda_1,\lambda_2$, it suffices to consider $\mu=\nu=1$.
For each root $x_i$, and for $0< t \le 1$ consider the interval $I_t(x_0)$ containing $x_0$ of those points $x$ where $|f(x)| < t$. We first show that for any $0<t_1,t_2\le 1$ we have that $I_{t_1}(x_1)$ and $I_{t_2}(x_2)$ are disjoint for distinct roots $x_i \in I$ satisfying the lemma's assumption. Assume otherwise, because $f(x_1)=f(x_2)=0$, there exists $x_1<x<x_2$ such that $f'(x)=0$ and $|f(x)| \le \min\{t_1,t_2\}$, and so contradicts with our assumption. We will also show that $I_1(x_0)\subset (x_0-1,x_0+1)$. Indeed, assume otherwise for instance that $x_0-1\in I_1(x_0)$, then for all $x_0-1<x<x_0$ we have $|f(x)|<1$, and so $|f'(x)|>1$ over this interval. Without loss of generality we assume $f'(x)>1$ for all $x$ over this interval. The mean value theorem would then imply that $|f(x_0-1)|= |f(x_0-1)-f(x_0)|>1$, a contradiction with $x_0-1 \in I_1(x_0)$.  As a consequence, we can define $I(x_i)=I_1(x_i)$, for which at the endpoints the function behaves as desired.  
\end{proof}

\begin{corollary}\label{cor:separation} Fix positive $\mu$ and $\nu$. Let $I=(a,b)$ be an  interval of length at least $2 \mu/\nu$, and let $f$ be a $C^1$-function on $I$  such that at each point $x\in I$ we have either $|f(x)|> \mu$ or $|f'(x)| > \nu$. Let $g$ be a function such that $|g(x)|< \mu$ over $I$. Then for each root $x_i \in I$ of $f$ with $x_i-a>\mu/\nu$ and $b-x_i>\mu/\nu$ we can find a root $x_i'$ of $f+g$ such that $x_i' \in (x_i-\mu/\nu, x_i + \mu/\nu)$, and also the $x_i'$ are distinct.
\end{corollary}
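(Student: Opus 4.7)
The plan is to deduce Corollary \ref{cor:separation} directly from Lemma \ref{lemma:separation} by applying the intermediate value theorem to $f+g$ on each of the sign-change intervals supplied by the lemma.

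More precisely, I would proceed as follows. First, apply Lemma \ref{lemma:separation} to $f$ on $I=(a,b)$, which is legitimate because the length of $I$ is at least $2\mu/\nu$ and the trichotomy $|f(x)|>\mu$ or $|f'(x)|>\nu$ is assumed. For each root $x_i$ of $f$ satisfying $x_i-a>\mu/\nu$ and $b-x_i>\mu/\nu$ the lemma produces an interval $I(x_i)=(a_i',b_i')$ with the following three properties: (i) $f(a_i')f(b_i')<0$, (ii) $|f(a_i')|=|f(b_i')|=\mu$, and (iii) $I(x_i)\subset(x_i-\mu/\nu,x_i+\mu/\nu)$, and these intervals are pairwise disjoint.

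Next I would compare $f$ and $f+g$ at the endpoints of each $I(x_i)$. Because $|g(x)|<\mu$ throughout $I$, at the endpoint $a_i'$ we have $|g(a_i')|<\mu=|f(a_i')|$, so $f(a_i')+g(a_i')$ has the same sign as $f(a_i')$, and likewise at $b_i'$. Combined with property (i), this yields
\[
(f+g)(a_i')\,(f+g)(b_i')<0.
\]
Since $f+g$ is continuous on $[a_i',b_i']$, the intermediate value theorem produces a root $x_i'\in(a_i',b_i')\subset(x_i-\mu/\nu,x_i+\mu/\nu)$ of $f+g$, as required. Distinctness of the $x_i'$ is automatic from the disjointness guaranteed by Lemma \ref{lemma:separation}: the $x_i'$ lie in the pairwise disjoint intervals $I(x_i)$.

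There is no real obstacle here; the only thing that requires a moment's thought is the sign preservation at the endpoints, but this is immediate from the strict inequality $|g|<\mu$ combined with $|f|=\mu$ at $a_i'$ and $b_i'$. The whole argument is a short application of IVT on top of Lemma \ref{lemma:separation}, which is why the corollary is stated without a separate proof.
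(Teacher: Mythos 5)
Your proof is correct and is the natural (and essentially the only) way to deduce the corollary from Lemma \ref{lemma:separation}; the paper does not spell out this argument precisely because it is this short IVT application. The one small point worth noting is that you implicitly use continuity of $g$ so that IVT applies to $f+g$, which the corollary statement does not state explicitly but which holds in every application in the paper.
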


Now we prove Theorem \ref{thm:main} by considering the two tails separately.

\subsection{The lower tail}  We need to show that
\begin{equation}\label{eqn:lowertail}
\P(  \CZ(F)  \le \E \CZ(F)  - \eps \la ) \le e^{-c \eps^9 \la}.
\end{equation}

With the parameters $\al,\beta, \delta, \tau,R$ chosen as in  Corollary \ref{cor:manyroots},
 consider a non-exceptional polynomial $F$. Let $g$ be an eigenfucntion with $\|g\|_2 \le \tau$, where $\tau$ is chosen as in \eqref{eqn:parameters}.  Consider a stable interval $I_j$ with respect to $F$ (there are at least $(\frac{2\pi}{R} -\delta)\la$ such intervals). We first notice that the number of stable intervals $I_j$ over which $\max_{x \in 3I_j} |g(x)| > \al$ is at most at most $O(\delta \la)$. Indeed, assume that there are $M$ such intervals $3I_j$. Then we can choose $M/6$ such intervals that are $R/\la$-separated. By Theorem \ref{thm:largesieve} we have $(M/6) \al^2 \le \la \tau^2$, which implies $M \le 6\la (\tau \al^{-1})^2 =O(\delta \la)$. From now on we will focus on the stable intervals with respect to $F$ on which $|g|$ is smaller than $\al$. 


By Corollary \ref{cor:separation} (applied to $I= 3I_j$ with $\mu=\al$ and $\nu =\beta n$, note that $\al/ \beta \asymp \delta^{3/4} <R$), because $\max_{x \in 3I_j} |g(x)| < \al$, the number of roots of $F+g$  over each interval $I_j$ is at least as that of $F$. Hence if $F$ is such that $\CZ(F)\ge \E \CZ(F) - \eps \la/2$ and also $F$ has at least $ \E \CZ(F) - 2\eps \la/3$ roots over the stable intervals, then by Corollary \ref{cor:manyroots}, with appropriate choice of the parameters, $F$ has at least $ \E \CZ(F) - \eps \la$ roots over the stable intervals $I_j$ above where $|g| \le \al$, and hence Corollary \ref{cor:separation} implies that $F+g$ has at least $\E \CZ(F)-\eps \la$ roots over these stable intervals $I_j$. In particularly $F+g$ has at least $\E \CZ(F)-\eps \la$ roots over $\BT$. Let $\CU^{lower}$ be the collection of $\Bv_{F}$ from such $F$ (where $\CZ(F)\ge \E \CZ(F) - \eps \la/2$ and $F$ has at least $ \E \CZ(F) - 2\eps \la/3$ roots over the stable intervals).  Then by Corollary \ref{cor:control} 
and \eqref{eqn:Markov'} 
\begin{equation}\label{eqn:Uast}
\P(\Bv_{F} \in \CU^{lower}) \ge  1- \P\big(\CZ(F)\le \E \CZ(F) - \eps \la /2\big) - \P\big(N_s(F) 1_{F \in \CE_e^c} \le \E \CZ(F)- 2\eps \la/3 \big)  \ge 1/2.
\end{equation}

 \begin{proof}(of Equation \eqref{eqn:lowertail}) By our application of Corollary \ref{cor:separation} above, the set $\{\Bv, d_2(\Bv,\CU^{lower}) \le \tau\sqrt{2\la}\}$ is contained in the set of having at least $\E \CZ(F) - \eps \la$ roots. Furthermore, \eqref{eqn:Uast} says that $\P(\Bv_{F} \in \CU^{lower}) \ge 1/2$. Hence by Theorems \ref{thm:bounded} and \ref{thm:sobolev} 
$$
\P( \CZ(F)  \ge \E \CZ(F)  - \eps \la) \ge \P\Big(\Bv_{F}\in \big\{\Bv, d_2(\Bv,\CU^{lower}) \le \tau\sqrt{\la}\big \}\Big) \ge 1-\exp(- c \eps^9 \la),
$$
where we used the fact that $\tau \asymp \delta^2$ from \eqref{eqn:parameters}.
 \end{proof}

\subsection{The upper tail}
Our goal here is to justify the upper tail
\begin{equation}\label{eqn:uppertail}
\P(  \CZ(F)  \ge \E \CZ(F)  + \eps \la ) \le e^{-c \eps^9  \la}.
\end{equation}

Let $\CU^{upper}$ denote the set of $\Bv_{F}$ for which  $\CZ(F) \ge E \CZ(F)+ \eps \la$. By Theorem \ref{thm:exceptional} it suffices to assume that $F$ is non-exceptional.

 
 \begin{proof}(of Equation \eqref{eqn:uppertail}) Assume that for a non-exceptional $F$ we have $\CZ(F) \ge \E \CZ(F) + \eps \la$. Then by  Corollary \ref{cor:manyroots} the number of roots of $F$ over the stable intervals is at least $ \E \CZ(F) +2\eps \la/3$. Let us call the collection of $\Bv_{F}$ of these polynomials by $\CS^{upper}$. Then argue as in the previous subsection (with the same parameters of $\al, \beta, \tau, \delta$), Corollary \ref{cor:manyroots} and Corollary \ref{cor:separation} imply that any $h=F+g$ with $\|g\|_2 \le \tau$ has at least $\E \CZ(F) +  \eps \la /2$ roots. On the other hand, we know by \eqref{eqn:Markov} that the probability that $F$ belongs to this set of functions is smaller than $1/2$. It thus follows by Theorems \ref{thm:bounded} and \ref{thm:sobolev} that 
$$\P(\Bv_{F}\in  \CU^{upper}) \le e^{-c \eps^9  \la},$$
where we again used that $\tau \asymp \delta^{2}$.
\end{proof}

{\bf Acknowledgements.} The author is grateful to prof.~I.~Wigman for many helpful comments.


\end{document}